\let\expandafter\oldproof\csname\string\proof\endcsname
\let\oldendproof\endproof
\renewenvironment{proof}[1][\proofname]{%
	\oldproof[\bf #1]%
}{\oldendproof}
\newcommand{\PP}{\textsf{P}}
\newcommand{\NP}{\textsf{NP}}
\newcommand{\NPc}{\textsf{NPc}}
\theoremstyle{plain}
\newtheorem*{rep@theorem}{\rep@title}
\newcommand{\newreptheorem}[2]{%
\newenvironment{rep#1}[1]{%
 \def\rep@title{#2 \ref{##1}}%
 \begin{rep@theorem}}%
 {\end{rep@theorem}}}
\newtheorem{lemma}{Lemma}[section]
\newtheorem{theorem}[lemma]{Theorem}
\newtheorem{claim}{Claim}
\newtheorem{proposition}[lemma]{Proposition}
\newtheorem{observation}[lemma]{Observation}
\newtheorem{corollary}[lemma]{Corollary}
\newtheorem{definition}[lemma]{Definition}
\theoremstyle{definition}
\newtheorem{remark}[lemma]{Remark}
\definecolor{RED}{rgb}{1,0,0}\definecolor{blue}{rgb}{0,0,1} %DIF PREAMBLE
\title{
\vspace{-0.8cm}
On balanceable and simply balanceable regular graphs}
\author{
Milad Ahanjideh\\
\small FAMNIT and IAM, University of Primorska, Koper, Slovenia\\
\small \texttt{milad.ahanjideh@upr.si}
\and
Martin Milani\v c\\
\small FAMNIT and IAM, University of Primorska, Koper, Slovenia\\
\small \texttt{martin.milanic@upr.si}
\and
Mary Servatius\\
\small Koper, Slovenia\\
\small \texttt{emservat@gmail.com}
}
\date{}
\begin{document}

    \maketitle

    \begin{abstract}
    We continue the study of balanceable graphs, defined by Caro, Hansberg, and Montejano in 2021 as graphs $G$ such that any $2$-coloring of the edges of a sufficiently large complete graph containing sufficiently many edges of each color contains a balanced copy of $G$.
    While the problem of recognizing balanceable graphs was conjectured to be $\NP$-complete by  Dailly, Hansberg, and Ventura in 2021, balanceable graphs admit an elegant combinatorial characterization: a graph is balanceable if and only there exist two vertex subsets, one containing half of all the graph's edges and another one such that the corresponding cut contains half of all the graph's edges.
    We consider a special case of this property, namely when one of the two sets is a vertex cover, and call the corresponding graphs simply balanceable.
    We prove a number of results on balanceable and simply balanceable regular graphs.
    First, we characterize simply balanceable regular graphs via a condition involving the independence number of the graph.
    Second, we address a question of Dailly, Hansberg, and Ventura from 2021 and show that every cubic graph is balanceable.
    Third, using Brooks' theorem, we show that every $4$-regular graph with order divisible by $4$ is balanceable.
    Finally, we show that it is \NP-complete to determine if a $9$-regular graph is simply balanceable.\\
    \noindent

    {\bf Keywords:} balanceable graph, simply balanceable graph, cubic graph, $4$-regular graph, regular graph, independent set\\

    {\bf MSC (2020)}:
    05C55, % Generalized Ramsey theory
    05C75, % Structural characterization of families of graphs
    68Q25 % Analysis of algorithms and problem complexity
    \end{abstract}

    \section{Introduction}

\subsection{Background: balanceable graphs}

A \emph{$2$-coloring} of the edges of a complete graph $K_n$ is a function $\varphi$ from the edge set of $K_n$ to the set $\{R, B\}$ of two colors, red and blue.
  The well-known Ramsey's theorem (see~\cite{MR1576401}) asserts that if $G$ is a complete graph, then any $2$-coloring of the edges of a large enough complete graph $K_n$ yields a monochromatic copy of $G$.
  Caro et al.~\cite{Caro2} (see also~\cite{Caro1}) considered a different approach to $2$-colorings of $K_n$, asking for a \emph{balanced copy} of a graph $G$ in such a coloring and defined the notion of \emph{balanceability}.

  A $2$-coloring $\varphi$ of the edges of $K_n$ is said to contain a balanced copy of a graph $G$ if there exists a copy of $G$ in $K_n$ such that its edge set $E$ can be partitioned in two parts $\left(E_1, E_2\right)$ such that $\left\|E_1|-|E_2\right\| \leq 1$ and $\varphi(e)=R$ for all $e \in E_1$, and $\varphi(e)=B$ for all $e \in E_2$.

  Let $G$ be a graph and $n$ be a positive integer. Let {\rm bal}$(n, G)< \lfloor\frac{1}{2}{n\choose 2}\rfloor$ be the smallest integer, if it exists, such that
  every $2$-coloring $\varphi\colon E\left(K_n\right) \rightarrow\{R, B\}$ of the edges of the complete graph $K_n$ with $\left|\varphi^{-1}(R)\right|,\left|\varphi^{-1}(B)\right|>$ {\rm bal}$(n, G)$ contains a balanced copy of $G$.\footnote{In~\cite{Caro2}, the condition {\rm bal}$(n, G)< \lfloor\frac{1}{2}{n\choose 2}\rfloor$ was not stated; however, this condition is obviously necessary, since without it, {\rm bal}$(n, G)$ would exist for every $n$, for a trivial reason.}
 %Indeed, for $\ell = \lfloor\frac{1}{2}{n\choose 2}\rfloor$ no $2$-coloring $\varphi\colon E\left(K_n\right) \rightarrow\{R, B\}$ of the edges of the complete graph $K_n$ satisfies $\left|\varphi^{-1}(R)\right|,\left|\varphi^{-1}(B)\right|>\ell$, hence, the condition that every such $2$-coloring contains a balanced copy of $G$ is satisfied vacuously.
%Therefore, without the stated condition {\rm bal}$(n, G)$ would always exist.}
If there exists an integer $n_0$ such that, for every integer $n \geq n_0$, {\rm bal}$(n, G)$ exists, then $G$ is said to be \emph{balanceable}.

  It seems challenging to decide whether a graph is balanceable.
  Caro, Hansberg, and Montejano~\cite{Caro2} gave a characterization of balanceable graphs (see \cref{sec:SBG}) and used it to show that all trees are balanceable.
  They also introduced a new family of balanceable graphs called amoebas and developed their study further in~\cite{Caro6}.
  In an earlier work~\cite{Caro1}, the same authors showed that $K_4$ is the only balanceable complete graph with an even number of edges.
  In~\cite{main}, Dailly, Hansberg, and Ventura gave some conditions for a graph to be balanceable and characterized the balanceability property within rectangular and triangular grids, as well as for special circulant graphs.
  In general, however, the question of which graphs are balanceable is widely open.
  In fact, Dailly, Hansberg, and Ventura conjectured in~\cite{main} that the problem of determining whether a given graph is balanceable is \NP-complete.
Further interesting results on the notion of balanceability of graphs, variants, and generalizations can be found in~\cite{Caro3,Caro5,Dail}.

\subsection{Simply balanceable graphs}\label{sec:SBG}

Caro, Hansberg, and Montejano~\cite{Caro2} gave a useful characterization of balanceability.
To state it, we need a couple of definitions.
Given two integers $k$ and $\ell$, we say that $\ell$ is \emph{essentially half} of $k$ if $\ell \in\left\{\left\lfloor\frac{k}{2}\right\rfloor,\left\lceil\frac{k}{2}\right\rceil\right\}$.
The \emph{size} of a graph $G$ is the number of edges in $G$.
Further, given a graph $G = (V,E)$, we say that a set $X\subseteq V$ is an \emph{internally balanced set} in $G$ if the number of edges with both endpoints in $X$ is essentially half of the size of $G$.
Similarly, $X$ is an \emph{externally balanced set} in $G$ if the number of edges with exactly one endpoint in $X$ is essentially half of the size of $G$.
In these terms, the announced characterization of balanceability due to Caro, Hansberg, and Montejano (see~\cite[Theorem 2.6]{Caro2}) can be stated as follows.

\begin{theorem}\label{thm:CHM}
A graph $G$ is balanceable if and only if there exists an internally balanced set in $G$ as well as an externally balanced set in $G$.
\end{theorem}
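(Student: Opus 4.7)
I would prove both directions separately, with the backward direction being substantially harder.

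For the forward direction, assume $G$ is balanceable, so $\text{bal}(n, G)$ exists for all large $n$. I would construct two specific structured $2$-colorings of $K_n$ for large $n$ and extract the balanced sets from their balanced copies of $G$. The \emph{clique coloring} picks $A \subseteq V(K_n)$ and colors red exactly the edges of $K_A$; in a balanced copy realized by an embedding $f \colon V(G) \hookrightarrow V(K_n)$, the red edges of the copy are precisely the images of edges of $G[f^{-1}(A)]$, so $X := f^{-1}(A)$ is internally balanced. The \emph{cut coloring} picks $A \subseteq V(K_n)$ and colors red exactly the edges between $A$ and $V(K_n) \setminus A$; a balanced copy then yields $X := f^{-1}(A)$ as an externally balanced set. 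For each coloring, one must choose $|A|$ so that both color classes exceed $\text{bal}(n, G)$, which amounts to placing $\binom{|A|}{2}$ (respectively $|A|(n-|A|)$) in the window $(\text{bal}(n, G), \binom{n}{2} - \text{bal}(n, G))$; for sufficiently large $n$ the values of $\binom{a}{2}$ and $a(n-a)$ for $0 \le a \le n$ are dense enough in the relevant range to guarantee a valid choice, possibly after enlarging $n$.

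For the backward direction, given an internally balanced set $X_I$ and an externally balanced set $X_E$, we must exhibit, for every sufficiently large $n$ and every $2$-coloring $\varphi$ of $E(K_n)$ with both color classes sufficiently large, a balanced copy of $G$. For an embedding $f \colon V(G) \hookrightarrow V(K_n)$, let $\rho(f)$ count the red edges of the embedded $G$, and let $m = |E(G)|$. My plan is a two-step extremes-plus-interpolation argument: first, produce $f_{\max}$ with $\rho(f_{\max}) \ge \lceil m/2 \rceil$ and $f_{\min}$ with $\rho(f_{\min}) \le \lfloor m/2 \rfloor$ by using $X_I$ and $X_E$ in conjunction with Ramsey- or density-type results guaranteeing large monochromatic cliques and bicliques inside the colored $K_n$; second, connect $f_{\min}$ to $f_{\max}$ by a sequence of single-vertex swaps and use an intermediate-value argument on the bounded changes of $\rho$ to land in $\{\lfloor m/2 \rfloor, \lceil m/2 \rceil\}$.

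The chief obstacle is the granularity of the interpolation: a single swap can change $\rho$ by as much as $\Delta(G)$, whereas the target window has width at most $2$, so naive interpolation can jump over the target. Overcoming this will likely require either a finer swap operation (for instance, composite or ``half'' swaps engineered to have net change $\pm 1$), a parity argument exploiting the flexibility provided simultaneously by $X_I$ and $X_E$, or a probabilistic averaging over random embeddings that concentrates $\rho$ near $m/2$. A second subtlety, already present in the ``extremal embedding'' step, is that the red graph of $\varphi$ need not be clique-like or cut-like in a clean way, so both $X_I$ and $X_E$ must be used in concert with a structural decomposition of the coloring; pinning down the right structural dichotomy is, in my view, the heart of the argument.
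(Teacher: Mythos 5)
First, note that the paper does not prove this statement: it is quoted from Caro, Hansberg, and Montejano (\cite[Theorem~2.6]{Caro2}), so there is no in-paper proof to compare against. Your forward direction follows the standard argument from that source (the red-clique and red-complete-bipartite colorings), and the extraction of $f^{-1}(A)$ as an internally, respectively externally, balanced set is correct. The one point you gloss over is the claim that $\binom{a}{2}$ and $a(n-a)$ are ``dense enough'' in the window $\left(\mathrm{bal}(n,G),\, \binom{n}{2}-\mathrm{bal}(n,G)\right)$: the definition only guarantees $\mathrm{bal}(n,G)<\left\lfloor \binom{n}{2}/2\right\rfloor$, so the window may contain only one or two integers, while consecutive values of $\binom{a}{2}$ near $\binom{n}{2}/2$ differ by roughly $n/\sqrt{2}$. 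This is repairable---balanceability provides $\mathrm{bal}(n,G)$ for \emph{every} large $n$, and one can restrict to an infinite family of $n$ (a Pell-type argument for the clique coloring, $n=4k^2$ for the cut coloring) where the required value is attained exactly---but as written the density claim fails in the worst case and needs this extra step.

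The genuine gap is in the backward direction. Your plan is to produce embeddings $f_{\min}$, $f_{\max}$ with red counts on either side of $m/2$ and interpolate by single-vertex swaps, but, as you yourself observe, a swap changes $\rho$ by up to $\Delta(G)$ while the target set has at most two values, so the intermediate-value step does not close; none of your three proposed fixes is carried out, and it is not clear any of them can be. The argument in \cite{Caro2} avoids interpolation entirely. It rests on an ``unavoidable patterns'' theorem: for every $t$ there is $n_0$ such that every $2$-coloring of $K_n$ with $n\ge n_0$ and sufficiently many edges of each color contains a set of $2t$ vertices on which the coloring is of one of a few canonical types---a monochromatic $K_t$ inside an otherwise oppositely colored $K_{2t}$, a monochromatic $K_{t,t}$ with the remaining edges of the other color, or two disjoint monochromatic $K_t$'s with the cross edges of the other color. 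Taking $t\ge |V(G)|$, one embeds $G$ into such a $2t$-set so that the internally balanced set lands in the monochromatic clique (first pattern), or the externally balanced set lands on one side of the bipartition (second pattern), or the externally balanced set and its complement go to the two cliques (third pattern); the number of red edges of the copy is then exactly $e_G(X_I)$, $e_G(X_E,V\setminus X_E)$, or $|E(G)|-e_G(X_E,V\setminus X_E)$, each essentially half of $|E(G)|$ by hypothesis. This structured-subcoloring theorem is the ``structural dichotomy'' you correctly identify as the heart of the matter but do not supply; without it (or a working substitute) the backward direction is not proved.
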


An interesting special case occurs if a graph $G$ admits an internally (or externally) balanced set that is also a \emph{vertex cover}, that is, a set of vertices touching all edges.
Indeed, if $C$ is a vertex cover in $G$, then every edge in $G$ has either both endpoints in $C$ or exactly one endpoint in $C$.
This immediately implies that the following conditions are equivalent: (i) $C$ is internally balanced, (ii) $C$ is externally balanced, and (iii) the complementary set $V\setminus C$ is externally balanced (the equivalence between (ii) and (iii) holds for any set $C\subseteq V$).
Furthermore, since a set $X\subseteq V$ is a vertex cover if and only if $V\setminus X$ is an independent set, the following conditions are equivalent for a graph $G$:
\begin{enumerate}[(i)]
\item there exists an internally balanced vertex cover in $G$,
\item there exists an externally balanced vertex cover in $G$, and
\item there exists an externally balanced independent set in $G$.
\end{enumerate}
We say that a graph is \emph{simply balanceable} if it satisfies one of the above three equivalent conditions (and, hence, all of them).
The equivalence of these three conditions and \cref{thm:CHM} imply that every simply balanceable graph is balanceable, which justifies the choice of the name.

Many arguments establishing balanceability of certain graphs (see, e.g., Dailly, Hansberg, and Ventura~\cite{main}) are based on showing the existence of an externally balanced independent set, that is, on showing that the graph is in fact simply balanceable.

\subsection{Our results}

In this paper we continue the study of balanceable and simply balanceable graphs, with a focus on regular graphs.
We prove four main results.

Our first main result is the following characterization of simply balanceable regular graphs via a condition involving the independence number of the graph.

\begin{reptheorem}{lem:regular-sb}
Let $k\ge 2$ be an integer and let $G$ be an $n$-vertex $k$-regular graph.
Then $G$ is simply balanceable if and only if one of the following conditions holds.
\begin{enumerate}
\item $n\not\equiv 2~({\rm mod}~4)$ and $k = 2$.
\item $n\equiv 0~({\rm mod}~4)$ and $\alpha(G)\ge n/4$,
\end{enumerate}
\end{reptheorem}

\Cref{lem:regular-sb} has several consequences.
First, it shows that in the case of $2$-regular graphs, the balanceability and simple balanceability conditions are equivalent.
Second, it implies that if $G$ is a cubic graph of order $n$, then $G$ is simply balanceable if and only if $n\equiv 0~({\rm mod}~4)$.
Furthermore, combined with Brooks' theorem, it leads to the same characterization of simple balanceability for connected $4$-regular graphs.

In contrast to the challenging question on recognizing balanceable graphs, our second main result answers the analogous question for simply balanceable graphs.
The proof of this result is facilitated by \Cref{lem:regular-sb}.

\begin{reptheorem}{NP-complete}
Determining whether a given $9$-regular graph is simply balanceable is \NP-complete.
\end{reptheorem}

Next, we address a question of Dailly, Hansberg, and Ventura regarding balanceability of cubic graphs (see~\cite{main}).

\begin{reptheorem}{Thm:cubic graph}
Every cubic graph is balanceable.
\end{reptheorem}

Finally, we turn our attention to the case of $4$-regular graphs.
In this case, a necessary condition for balanceability is that the graph has even order.
In the case of order $n\equiv 2~({\rm mod}~4)$, some graphs are balanceable and some are not (see~\cite{main}).
In contrast to this, we show, using Brooks' theorem, that there are no exceptions to balanceability in the case of order $n\equiv 0~({\rm mod}~4)$.

\begin{reptheorem}{Thm:4-regular}
Every $4$-regular graph of order $n$, where $n\equiv 0~({\rm mod}~4)$, is balanceable.
\end{reptheorem}

\Cref{Thm:4-regular} generalizes Lemmas 15 and 16 in~\cite{main}, which establish that certain $4$-regular circulant graphs are balanceable.

\paragraph{Structure of the paper.}
After collecting the necessary preliminaries in \Cref{sec:prelim}, we
characterize simply balanceable regular graphs (\Cref{lem:regular-sb}) and derive several corollaries in \Cref{sec:regular}.
We prove the \NP-completeness result (\cref{NP-complete}) in \cref{sec:complexity}.
Questions regarding balanceability of cubic and $4$-regular graphs are investigated in \cref{cubic,4-regular}, respectively.
We conclude the paper in \Cref{sec:concl} with a summary of our results and some open questions.

\section{Preliminaries}\label{sec:prelim}

Throughout this paper, all graphs are assumed to be finite, simple (that is, without loops or multiple edges), and undirected.
The vertex and the edge set of a graph $G$ are denoted with $V(G)$ and $E(G)$, respectively.
The \emph{order} of $G$ is the number of vertices.
For a vertex $v\in V(G)$, we denote by $N_G(v)$ the set of all neighbors of $v$ and by $d_G(v)$ (or simply by $d(v)$ if the graph is clear from the context) its degree.
For a set $X\subseteq V(G)$, we denote by $N(X)$ the set of vertices in $V(G)\setminus X$ that have a neighbor in $X$ and by $N[X]$ the set $N(X)\cup X$.
The maximum degree of a vertex in $G$ is denoted by $\Delta(G)$.
For a nonnegative integer $k$, a graph is said to be \emph{$k$-regular} if all its vertices have degree $k$.
We will refer to $3$-regular graphs also as \emph{cubic} graphs.
A graph is \emph{regular} if it is $k$-regular for some $k\ge 0$.

Let $G = (V,E)$ be a graph.
A \emph{vertex cover} in $G$ is a set $C\subseteq V$ such that every edge of $G$ has an endpoint in $C$.
An \emph{independent set} in $G$ is a set of pairwise nonadjacent vertices.
We denote by $\alpha(G)$ be the \emph{independence number} of $G$, that is, the maximum cardinality of an independent set in $G$, and by $\chi(G)$ its \emph{chromatic number}, that is, the smallest integer $k$ such that $G$ is $k$-colorable (that is, it admits a partition of the vertex set into $k$ independent sets).
Coloring greedily vertices of a graph one at a time shows that every graph $G$ satisfies $\chi(G)\le \Delta(G)+1$.
Furthermore, Brooks' theorem~\cite{Brook} asserts that every connected graph $G$ that is neither an odd cycle nor a complete graph is $\Delta(G)$-colorable.

A \emph{matching} in a graph is a set of pairwise disjoint edges.
A graph $G$ is \emph{$k$-edge-colorable} if it admits a partition of the edge set into $k$ matchings.
The \emph{chromatic index} of $G$, denoted $\chi'(G)$, is the smallest integer $k$ such that $G$ is $k$-edge-colorable.
The \emph{line graph} of a graph $G= (V,E)$ is the graph $L(G)$ with vertex set $E$ in which distinct two vertices are adjacent if and only if the corresponding edges of $G$ share an endpoint.

The \emph{Cartesian product} of two graphs $G$ and $H$ is the graph $G \Box H$ with vertex set $V(G \Box H)=V(G) \times V(H)$, and for which $(x, u)(y, v)$ is an edge if and only if either $x=y$ and $u v \in E(H)$, or $x y \in E(G)$ and $u=v$. We denote by $G+H$ the disjoint union of graphs $G$ and $H$.
For a graph $G$ and a set $X\subseteq V(G)$, we denote by $G[X]$ the subgraph of $G$ induced by $X$, that is, the graph with vertex set $X$ in which two vertices are adjacent if and only if they are adjacent in $G$.
By $G-X$ we denote the subgraph of $G$ induced by $V(G)\setminus X$.

Next, we recall the original formulation of the characterization of balanceability given by \cref{thm:CHM}.
For a graph $G$ and a set $X\subseteq V(G)$, we denote by $e_G(X)$ the number of edges in the subgraph of $G$ induced by $X$, and, for $Y\subseteq V(G)\setminus X$, by $e_G(X, Y)$ the number of edges with one endpoint in $X$ and the other in $Y$.

\begin{theorem}[A reformulation of \cref{thm:CHM}]\label{partition}
A graph $G = (V,E)$ is balanceable if and only if $G$ has both a set of vertices $W \subseteq V$ and a partition $V=\{X,Y\}$ such that
\begin{equation}\label{eq:balanceable}
e_G(W),\, e_G(X, Y) \in\left\{\left\lfloor\frac{|E|}{2}\right\rfloor,\left\lceil\frac{|E|}{2}\right\rceil\right\}\,.
\end{equation}
\end{theorem}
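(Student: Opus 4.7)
The plan is to derive \cref{partition} as an immediate reformulation of \cref{thm:CHM}, once the definitions of internally and externally balanced sets are unpacked in the notation $e_G(\cdot)$ and $e_G(\cdot,\cdot)$. The only non-trivial ingredient is the simple observation that for any partition $\{X,Y\}$ of $V(G)$, the quantity $e_G(X,Y)$ equals the number of edges of $G$ having exactly one endpoint in $X$ (equivalently, exactly one endpoint in $Y$), because every edge of $G$ has both endpoints in $X$, both endpoints in $Y$, or exactly one endpoint in each.

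For the forward direction, I would assume $G$ is balanceable and invoke \cref{thm:CHM} to obtain an internally balanced set $W \subseteq V$ and an externally balanced set $X \subseteq V$. Unpacking definitions, the former yields $e_G(W) \in \{\lfloor |E|/2\rfloor, \lceil |E|/2\rceil\}$, and the latter says that the number of edges with exactly one endpoint in $X$ lies in the same two-element set. Setting $Y := V \setminus X$, the observation above then gives $e_G(X,Y) \in \{\lfloor |E|/2\rfloor, \lceil |E|/2\rceil\}$, so $W$ together with the partition $\{X,Y\}$ satisfies \eqref{eq:balanceable}.

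Conversely, suppose $W \subseteq V$ and a partition $\{X,Y\}$ of $V$ satisfy \eqref{eq:balanceable}. Then $W$ is internally balanced directly from the definition, and by the same counting observation, $X$ is externally balanced, since $e_G(X,Y)$ coincides with the number of edges of $G$ having exactly one endpoint in $X$. Applying \cref{thm:CHM} yields that $G$ is balanceable.

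I do not anticipate any real obstacle; the proof is a bookkeeping exercise matching the internal/external balance terminology of \cref{thm:CHM} with the edge-counting notation $e_G(W)$ and $e_G(X,Y)$. The only point worth stating explicitly is the identification of $e_G(X,Y)$ with the cut size $|\{uv \in E : |\{u,v\} \cap X| = 1\}|$ when $\{X,Y\}$ partitions $V$, which is immediate from the case analysis on where the endpoints of an edge lie.
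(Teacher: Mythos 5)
Your proposal is correct and matches the paper's treatment: the paper offers no separate proof of \cref{partition}, presenting it as an immediate reformulation of \cref{thm:CHM} obtained by exactly the definition-unpacking you describe (identifying ``externally balanced'' with the cut count $e_G(X,V\setminus X)$ and ``internally balanced'' with $e_G(W)$). Nothing is missing.
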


An interesting special case arises when the conditions from \cref{partition} are satisfied with $W = X$.
In particular, this is the case if $W$ is a vertex cover in $G$ such that
\[\left\lfloor\frac{|E|}{2}\right\rfloor\le e_G(W) \le \left\lceil\frac{|E|}{2}\right\rceil\,.\]
Indeed, in this case, setting $Y = V(G)\setminus W$ we have $e_G(Y) = 0$ and hence $|E| = e_G(W) +e_G(X,Y)$, implying \eqref{eq:balanceable}.
Reformulating the condition on $W$ in terms of its complement $V(G)\setminus W$, which is an independent set, leads to the following definition.

\begin{definition}\label{def:sb}
A graph $G = (V,E)$ is \emph{simply balanceable} if there exists an independent set $I$ in $G$ such that
\[\left\lfloor\frac{|E|}{2}\right\rfloor\le \sum_{x \in I} d(x)\le \left\lceil\frac{|E|}{2}\right\rceil\,.\]
\end{definition}

Note that, in terms of the terminology used in \Cref{sec:SBG}, a graph $G$ is defined to be simply balanceable if there exists an externally balanced independent set in $G$.

\begin{proposition}
\label{indep}
Every simply balanceable graph is balanceable.
\end{proposition}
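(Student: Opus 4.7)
The plan is to apply the reformulated characterization of balanceability from \cref{partition} to an independent set witnessing simple balanceability. Let $G=(V,E)$ be simply balanceable, and let $I$ be an independent set in $G$ witnessing this, so that
\[
\left\lfloor\frac{|E|}{2}\right\rfloor \le \sum_{x\in I}d(x) \le \left\lceil\frac{|E|}{2}\right\rceil.
\]
Set $W=V\setminus I$ and take the partition $V=\{X,Y\}$ with $X=W$ and $Y=I$.

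The key observation is that, since $I$ is independent, every edge of $G$ has at least one endpoint in $W$, so $W$ is a vertex cover. Consequently every edge of $G$ lies either in $G[W]$ or in the cut between $W$ and $I$, giving $|E|=e_G(W)+e_G(W,I)$. Moreover, because $I$ has no internal edges, $e_G(W,I)=\sum_{x\in I}d(x)$, which by the simple balanceability hypothesis already lies in $\{\lfloor|E|/2\rfloor,\lceil|E|/2\rceil\}$. Using $|E|=e_G(W)+e_G(W,I)$, the value $e_G(W)=|E|-e_G(W,I)$ therefore also lies in $\{\lfloor|E|/2\rfloor,\lceil|E|/2\rceil\}$.

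Thus both $e_G(W)$ and $e_G(X,Y)=e_G(W,I)$ are essentially half of $|E|$, which verifies \eqref{eq:balanceable} in \cref{partition}, so $G$ is balanceable. There is essentially no obstacle here: the proof is a direct bookkeeping argument, and indeed the paragraph preceding \cref{def:sb} already sketches exactly this reduction. The only thing to make sure of is the passage between the two forms of the ``essentially half'' condition, which follows immediately from the identity $|E|=e_G(W)+e_G(W,I)$ combined with the fact that $\lfloor|E|/2\rfloor+\lceil|E|/2\rceil=|E|$.
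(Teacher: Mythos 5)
Your proposal is correct and follows essentially the same route as the paper: choose $W=V\setminus I$ and the partition $\{I,\,V\setminus I\}$, then verify condition \eqref{eq:balanceable} of \cref{partition} using $|E|=e_G(W)+e_G(W,I)$ and $\lfloor|E|/2\rfloor+\lceil|E|/2\rceil=|E|$. The paper's proof is just a terser version of the same bookkeeping, deferring the details to the discussion preceding \cref{def:sb}.
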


\begin{proof}
Let $G = (V,E)$ be simply balanceable and let $I$ be an independent set $I$ in $G$ such that $\left\lfloor\frac{|E|}{2}\right\rfloor\le \sum_{x \in I} d(x)\le \left\lceil\frac{|E|}{2}\right\rceil$.
Then, condition \eqref{eq:balanceable} from \Cref{partition} is satisfied by the set $W = V\setminus I$ and the partition $\{X,Y\}$ of $V$ given by $X = I$ and $Y = V\setminus I$ and, hence, $G$ is balanceable.
\end{proof}

\Cref{indep} for even $|E|$ was extensively used in Dailly, Hansberg, and Ventura~\cite{main}, especially for graphs with an even number of edges.
In particular, for bipartite regular graphs, \Cref{indep} implies the following (see~\cite[Proposition 10]{main}).

\begin{sloppypar}
\begin{proposition}\label{bip-regular}
Every regular bipartite graph of order \hbox{$n\equiv 0~({\rm mod}~4)$} is simply balanceable.
\end{proposition}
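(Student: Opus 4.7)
The plan is to exploit two features of a regular bipartite graph: one side of the bipartition is already a large independent set, and the number of edges is a multiple of the regularity, which together make the target sum $|E|/2$ an integer that is easy to hit on the nose.

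Concretely, let $G$ be a $k$-regular bipartite graph with bipartition $(A,B)$ and $n=|V(G)|$ vertices, where $n\equiv 0\pmod 4$. First I would handle the trivial case $k=0$ separately (the empty set is an externally balanced independent set since $|E|=0$) and then assume $k\ge 1$. Since $G$ is $k$-regular and bipartite, the standard double-counting identity $k|A|=|E|=k|B|$ yields $|A|=|B|=n/2$. Consequently $|E|=kn/2$, and the hypothesis $4\mid n$ ensures that
\[
\frac{|E|}{2}=\frac{kn}{4}
\]
is an integer and, moreover, an integer multiple of $k$.

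The heart of the argument is then the choice of $I$. Since $|A|=n/2\ge n/4$ and $n/4$ is an integer, I can pick an arbitrary subset $I\subseteq A$ with $|I|=n/4$. This set is independent because $A$ is, and because $G$ is $k$-regular we have
\[
\sum_{x\in I} d_G(x) \;=\; k|I| \;=\; \frac{kn}{4} \;=\; \frac{|E|}{2}.
\]
In particular $\lfloor |E|/2\rfloor \le \sum_{x\in I}d_G(x)\le \lceil |E|/2\rceil$, so $I$ witnesses simple balanceability via \Cref{def:sb}.

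There is no real obstacle here: the two alignments that have to cooperate (the target sum being a multiple of $k$, and one side of the bipartition being large enough to contain an independent set of the required size) are both immediate from the divisibility assumption $n\equiv 0\pmod 4$ together with regularity. The only care I would take is to make the $k=0$ case explicit, since the identity $|A|=|B|=n/2$ derived from $k|A|=k|B|$ only requires regularity (not $k\ge 1$), but the statement $|I|=n/4$ achieving $\sum d(x)=|E|/2$ is vacuously trivial when $k=0$ and does not really need the argument above.
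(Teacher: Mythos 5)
Your proof is correct and follows exactly the route the paper intends: the paper states this proposition without an explicit proof, noting only that it follows from \Cref{indep} (equivalently, \Cref{def:sb}) as in Proposition~10 of~\cite{main}, and your choice of an $n/4$-element subset of one side of the bipartition, with degree sum $kn/4=|E|/2$, is precisely that argument. The extra care about $k=0$ and about $|A|=|B|=n/2$ is sound and does not change the substance.
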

\end{sloppypar}

It is not difficult to construct examples showing that the class of balanceable graphs is not closed under disjoint union (see~\Cref{remark}).
Nevertheless, such a closure property holds under an additional assumption.

\begin{lemma}\label{lem:join}
    Let $G$ and $H$ be balanceable graphs such that $|E(H)|$ is even.
    Then $G+H$ is balanceable.
\end{lemma}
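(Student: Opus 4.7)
The plan is to reduce to \cref{thm:CHM}: because the edges of the disjoint union $G+H$ split cleanly as $E(G+H)=E(G)\cup E(H)$ with no edge crossing between $V(G)$ and $V(H)$, both an internally balanced set and an externally balanced set in $G+H$ can be assembled by gluing together such sets chosen separately in the two summands. Concretely, I would apply \cref{thm:CHM} to $G$ to obtain an internally balanced set $X_G\subseteq V(G)$ and an externally balanced set $A_G\subseteq V(G)$, and apply it to $H$ to obtain analogous sets $X_H,A_H\subseteq V(H)$. I would then set
\[X=X_G\cup X_H\qquad\text{and}\qquad A=A_G\cup A_H\]
and verify that $X$ is internally balanced and $A$ is externally balanced in $G+H$, which by \cref{thm:CHM} is all that is needed.

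The substance is the parity bookkeeping, which is where the hypothesis on $|E(H)|$ enters. Evenness of $|E(H)|$ forces $\lfloor|E(H)|/2\rfloor=\lceil|E(H)|/2\rceil=|E(H)|/2$, so $e_H(X_H)$ and the size of the $H$-cut across $A_H$ are each \emph{exactly} $|E(H)|/2$. Combined with the absence of cross edges, this yields $e_{G+H}(X)=e_G(X_G)+|E(H)|/2$ and an analogous decomposition of the size of the cut of $G+H$ across $A$. Since $|E(G+H)|$ and $|E(G)|$ share the same parity, the identities
\[\lfloor|E(G+H)|/2\rfloor=\lfloor|E(G)|/2\rfloor+|E(H)|/2,\qquad \lceil|E(G+H)|/2\rceil=\lceil|E(G)|/2\rceil+|E(H)|/2\]
follow at once, so adding the integer $|E(H)|/2$ sends $e_G(X_G)\in\{\lfloor|E(G)|/2\rfloor,\lceil|E(G)|/2\rceil\}$ into $\{\lfloor|E(G+H)|/2\rfloor,\lceil|E(G+H)|/2\rceil\}$, and the same calculation works for the cut across $A$.

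The only delicate point is this floor/ceiling matching, and it is precisely what fails without the evenness hypothesis: if $|E(H)|$ were odd the $H$-contribution would itself be rounded, and combined with an approximate $G$-contribution the sum could land one unit outside the essentially-half window of $|E(G+H)|$. This is in fact the mechanism behind the non-closure of balanceability under arbitrary disjoint union mentioned just before the lemma statement. Once $|E(H)|$ is even the $H$-contributions become exact integers, so the only rounding that remains is already absorbed by the choice of $X_G$ or $A_G$ on the $G$-side, and \cref{thm:CHM} delivers balanceability of $G+H$.
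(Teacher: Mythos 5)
Your proof is correct and takes essentially the same route as the paper's: both combine an internally balanced set and an externally balanced set (equivalently, a cut) from each summand by taking unions, and both use the evenness of $|E(H)|$ to guarantee that the $H$-contribution is exactly $|E(H)|/2$, so that the rounding is absorbed entirely on the $G$-side. The only cosmetic difference is that the paper works with the reformulation in \cref{partition} rather than directly with \cref{thm:CHM}.
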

\begin{proof}
    Assume that $2k\leq |E(G)|\leq 2k+1$ and $|E(H)|=2\ell$ and for two integers $k$ and $\ell$.
    Since $G$ is balanceable, by \Cref{partition}, there exist subsets $W$ and $X$ of $V(G)$ such that $\ell\le e_G(W)\le \ell+1$ and $\ell\le e_G(X, V(G)\setminus X)\le\ell+1$.
    Similarly, there are subsets $Z$ and $Y$ of $V(H)$ such that $e_H(W)=e_H(Y, V(H)\setminus Y)=k$.
    Set $U = W\cup Z$ and $S = X\cup Y$.
    Then \[e_{G+H}(U) = e_G(W)+e_H(Z) \in \{k+\ell,k+\ell+1\}\]
    and, similarly,
    \[e_{G+H}(S,V(G+H)\setminus S) = e_G(X, V(G)\setminus X)+e_H(Y, V(H)\setminus Y) \in \{k+\ell,k+\ell+1\}\,.\]
    Since $2\ell+2k \leq |E(G+H)|\leq 2\ell+2k+1$, it follows that
    \[e_{G+H}(U),\, e_{G+H}(S,V(G+H)\setminus S) \in\left\{\left\lfloor\frac{|E(H+G)|}{2}\right\rfloor,\left\lceil\frac{|E(H+G)|}{2}\right\rceil\right\}\,.\]
Hence $G+H$ is balanceable by \Cref{partition}.
\end{proof}

Let us say that a graph is \emph{even} if all its vertices have even degree.
We will make use of the following reformulation of~\cite[Proposition 7]{main}.

\begin{proposition}[Dailly, Hansberg, and Ventura~\cite{main}]
\label{prop:Eulerian}
Let $G=(V,E)$ be a balanceable even graph.
Then $|E|$ is either odd or a multiple of $4$.
\end{proposition}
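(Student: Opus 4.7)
The plan is to combine the reformulated characterization of balanceability (\Cref{partition}) with a parity observation about cuts in even graphs. Specifically, if $G=(V,E)$ is balanceable, then there exists a partition $\{X,Y\}$ of $V$ with
\[
e_G(X,Y)\in\left\{\left\lfloor\frac{|E|}{2}\right\rfloor,\left\lceil\frac{|E|}{2}\right\rceil\right\}.
\]
So the result will follow once I show that $e_G(X,Y)$ must be even whenever $G$ is even.

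The parity observation is standard. For any set $S\subseteq V$,
\[
e_G(S,V\setminus S)=\sum_{v\in S}d_G(v)-2e_G(S).
\]
If every vertex of $G$ has even degree, then $\sum_{v\in S}d_G(v)$ is even, and $2e_G(S)$ is certainly even, so $e_G(S,V\setminus S)$ is even. Applied to the partition above, this forces at least one of $\lfloor |E|/2\rfloor$ and $\lceil |E|/2\rceil$ to be even.

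Finally I would split into two cases depending on the parity of $|E|$. If $|E|$ is odd, we are done, so suppose $|E|$ is even. Then $\lfloor |E|/2\rfloor=\lceil |E|/2\rceil=|E|/2$, and the previous paragraph forces $|E|/2$ to be even, i.e.\ $|E|$ is a multiple of $4$. Hence $|E|$ is either odd or a multiple of $4$, as claimed. I do not foresee any real obstacle here: the whole argument is essentially the one-line fact that cuts in even graphs have even size, packaged together with \Cref{partition}.
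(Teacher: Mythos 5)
Your proof is correct. The paper itself does not prove this proposition---it is imported from Dailly, Hansberg, and Ventura (Proposition 7 of \cite{main})---but your argument is the standard one behind that result: by \Cref{partition} a balanceable graph admits a cut of size essentially half of $|E|$, the identity $e_G(S,V\setminus S)=\sum_{v\in S}d_G(v)-2e_G(S)$ shows every cut in an even graph has even size, and hence $|E|/2$ must be even whenever $|E|$ is even. This is a complete and self-contained justification of the cited statement.
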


\section{A characterization of simply balanceable regular graphs}\label{sec:regular}

In this section, we address the question of which regular graphs are simply balanceable.
We first deal with the following easy special case.

\begin{observation}\label{obs:1-regular}
Every $1$-regular graph is simply balanceable.
\end{observation}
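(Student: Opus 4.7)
The plan is to unwind the definitions and exploit the very rigid structure of $1$-regular graphs. A $1$-regular graph $G$ is, by definition, a disjoint union of $m$ edges for some nonnegative integer $m$, so $|V(G)| = 2m$ and $|E(G)| = m$. Denote these edges by $e_1, \ldots, e_m$ with $e_i = \{u_i, v_i\}$, and note that the endpoints of distinct edges are pairwise nonadjacent.

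Next I would use this structure to construct independent sets of any prescribed cardinality between $0$ and $m$: for any subset $S \subseteq \{1, \ldots, m\}$, the set $I_S = \{u_i : i \in S\}$ is independent in $G$, since the $u_i$ all lie in different components. Thus for every integer $t$ with $0 \le t \le m$, there exists an independent set $I$ in $G$ with $|I| = t$, and for such an $I$ we have, since $G$ is $1$-regular,
\[
\sum_{x \in I} d(x) \;=\; |I| \;=\; t.
\]

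It then suffices to pick $t = \lfloor m/2 \rfloor$, which indeed satisfies $0 \le t \le m$. By \Cref{def:sb}, $G$ is simply balanceable. The only conceivable obstacle is the edge case $m = 0$, i.e.\ the empty graph, but there the empty set $I = \varnothing$ is independent and trivially satisfies $\lfloor |E|/2\rfloor \le \sum_{x\in I} d(x) \le \lceil |E|/2\rceil$, so no issue arises.
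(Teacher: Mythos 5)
Your proposal is correct and follows essentially the same argument as the paper: decompose the $1$-regular graph into $m$ disjoint edges and take an independent set consisting of one endpoint from essentially half of them (the paper uses $\lceil m/2\rceil$ where you use $\lfloor m/2\rfloor$; both satisfy \Cref{def:sb}).
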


\begin{proof}
Let $G=(V,E)$ be a $1$-regular graph.
Then, $G$ is a disjoint union of $m$ edges, for some positive integer $m$, and any set of $\lceil{m/2}\rceil$ vertices, no two from the same edge, is an externally balanced independent set, that is, an independent set $I$ satisfying the condition of \Cref{def:sb}.
\end{proof}

The remaining cases are characterized as follows.

\begin{theorem}\label{lem:regular-sb}
Let $k\ge 2$ be an integer and let $G$ be an $n$-vertex $k$-regular graph.
Then $G$ is simply balanceable if and only if one of the following conditions holds.
\begin{enumerate}
\item $k = 2$ and $n\not\equiv 2~({\rm mod}~4)$.
\item $k \ge 3$, $n\equiv 0~({\rm mod}~4)$, and $\alpha(G)\ge n/4$.
\end{enumerate}
\end{theorem}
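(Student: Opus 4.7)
The plan is to translate simple balanceability for regular graphs into an arithmetic condition on the size of an independent set and then carry out a case analysis on $k$ and on $n$ modulo $4$. Since $G$ is $k$-regular on $n$ vertices, $|E(G)| = nk/2$ and $\sum_{x \in I} d(x) = k|I|$ for any $I \subseteq V(G)$. By \Cref{def:sb}, $G$ is then simply balanceable if and only if there exists an independent set $I$ with $k|I| \in \{\lfloor nk/4 \rfloor, \lceil nk/4 \rceil\}$. Because any subset of an independent set is independent, once the admissible values of $|I|$ are pinned down, the question reduces to a lower bound on $\alpha(G)$.

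For $k \ge 3$, I would first argue that $n \equiv 0 \pmod 4$ is necessary. If $nk \equiv 0 \pmod 4$, then the target is the single integer $nk/4$, and $k \mid nk/4$ forces $4 \mid n$. If $nk \equiv 2 \pmod 4$, the two targets are $(nk \pm 2)/4$, and the equation $k|I| = (nk \pm 2)/4$ rewrites as $k(n - 4|I|) = \pm 2$, which has no integer solution for $k \ge 3$. (The case $nk$ odd cannot arise since $|E(G)|$ is an integer.) When $n \equiv 0 \pmod 4$ does hold, the only admissible cardinality is $|I| = n/4$, so simple balanceability is equivalent to $\alpha(G) \ge n/4$.

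For $k = 2$, we have $|E(G)| = n$ and the condition becomes $2|I| \in \{\lfloor n/2 \rfloor, \lceil n/2 \rceil\}$. If $n \equiv 2 \pmod 4$, the unique target $n/2$ is odd and cannot equal an even number $2|I|$; for $n \equiv 0, 1, 3 \pmod 4$ a short parity check identifies the required $|I|$ as exactly one of $\lfloor n/4 \rfloor$ or $\lceil n/4 \rceil$. To produce such an $I$, I would use that a $2$-regular graph is a disjoint union of cycles of length at least three, and that $\alpha(C_m) = \lfloor m/2 \rfloor \ge \lceil m/4 \rceil$ for every $m \ge 3$; summing over components yields $\alpha(G) \ge \lceil n/4 \rceil$, after which an independent set of the required exact size is extracted as a subset.

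I expect the principal technical point to be the necessity direction when $k \ge 3$: one must exclude the possibility that the $\pm 1$ rounding slack in \Cref{def:sb} compensates for non-divisibility. The computation above makes this precise, essentially via the fact that any two consecutive multiples of $k$ differ by $k \ge 3 > 2$, so the window $\{\lfloor nk/4 \rfloor, \lceil nk/4 \rceil\}$ of width at most $1$ can contain a multiple of $k$ only when $nk/4$ itself is such a multiple, which in turn forces $4 \mid n$. The sufficiency directions, as well as the entire $k = 2$ analysis, are then routine given the independence-number bounds above.
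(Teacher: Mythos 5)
Your proposal is correct and follows essentially the same route as the paper: both reduce \Cref{def:sb} for $k$-regular graphs to the arithmetic condition $k|I|\in\{\lfloor nk/4\rfloor,\lceil nk/4\rceil\}$ and then do a case analysis on $nk \bmod 4$, with your divisibility argument $k(n-4|I|)=\pm 2$ being the same computation the paper phrases as the interval bound $q/4-1/(2k)\le |I|-p\le q/4+1/(2k)$. The only cosmetic difference is that for $k=2$ you extract the independence bound from the cycle decomposition ($\alpha(G)\ge\lceil n/4\rceil$) rather than from $3$-colorability ($\alpha(G)\ge n/3$) as the paper does; both suffice.
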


\begin{proof}
We first show that the conditions are sufficient.
Assume first that $k\ge 3$, $n\equiv 0~({\rm mod}~4)$, and $\alpha(G)\ge n/4$.
Then, there exists an independent set $I$ in $G$ such that $|I| = n/4$.
Note that $|E| = kn/2$ and, hence, $\sum_{x \in I} d(x) = k|I| = kn/4 = |E|/2$.
Therefore, $G$ is simply balanceable.

Assume now that $k = 2$ and $n\not\equiv 2~({\rm mod}~4)$.
Since $G$ is $2$-regular, we have $|E| = n$.
Furthermore, $G$ is $3$-colorable and hence $\alpha(G)\geq n/3$.
We consider three cases.

\paragraph{Case 1.} $n\equiv 0~({\rm mod}~4)$.
Since $\alpha(G)\geq n/3\ge n/4$, we infer that $G$ is simply balanceable by the above observation.

\paragraph{Case 2.} $n\equiv 1~({\rm mod}~4)$.
Let $p$ be the positive integer such that $n=4p+1$.
Then $|E| = 4p+1$.
Since $\alpha(G)\geq n/3 = (4p+1)/3\ge p$, there exists an independent set $I$ in $G$ of size $p$.
The sum of the degrees of vertices in $I$ is \[\sum_{x\in I}d(x)=2|I|=2p = \left\lfloor\frac{|E|}{2}\right\rfloor\,.\]
Hence, $G$ is simply balanceable.

\paragraph{Case 3.} $n\equiv 3~({\rm mod}~4)$.
Let $p$ be the positive integer such that $n=4p+3$.
Then $|E| = 4p+3$.
Similarly as before, $\alpha(G)\geq n/3 = (4p+3)/3\ge p+1$; in particular, $G$ contains an independent set $I$ of size $p+1$.
The sum of the degrees of vertices in $I$ is \[\sum_{x\in I}d(x)=2|I|=2p+2 = \left\lceil\frac{|E|}{2}\right\rceil\,.\]
Hence, $G$ is simply balanceable.

\medskip
Next, we show that the conditions are necessary.
Assume that $G = (V,E)$ is simply balanceable and let $I$ be an independent set in $G$ such that
\[\left\lfloor\frac{|E|}{2}\right\rfloor\le \sum_{x \in I} d(x)\le \left\lceil\frac{|E|}{2}\right\rceil\,.\]
Since $\sum_{x \in I} d(x) = k|I|$ and $|E| = kn/2$, the above condition simplifies to
\[\left\lfloor\frac{kn}{4}\right\rfloor\le k|I|\le \left\lceil\frac{kn}{4}\right\rceil\,.\]
Suppose first that $|E|$ is even.
Then $kn/4=|E|/2$ is an integer, implying that $k|I| = kn/4$ and, hence, that $\alpha(G) \ge |I| = n/4$ and $n\equiv 0~({\rm mod}~4)$.
In both cases, when either $k = 2$ or $k\ge 3$, the corresponding condition from the theorem statement is satisfied.
We may thus assume that $|E|$ is odd.
Then $|E|/2 = kn/4$ is an integer multiple of $1/2$ and
\[kn/4-1/2\le\left\lfloor\frac{kn}{4}\right\rfloor\le k|I|\le \left\lceil\frac{kn}{4}\right\rceil\le kn/4+1/2\,.\]
Let $p = \left\lfloor\frac{n}{4}\right\rfloor$ and $q = n-4p$.
Then $q\in \{1,2,3\}$ and
\[n/4-1/(2k)\le |I|\le n/4+1/(2k)\,.\]
Using $n/4 = p+q/4$, we obtain
\begin{equation}\label{eq:0}
q/4-1/(2k)\le |I|-p\le q/4+1/(2k)\,.
\end{equation}
Assume that $k$ is odd.
Then $n$ is even and thus $q = 2$, and the chain of inequalities \eqref{eq:0} simplifies to \[1/2-1/(2k)\le |I|-p\le 1/2+1/(2k)\,,\] which together with $k\ge 3$ implies $1/3\le |I|-p\le 2/3$, a contradiction, since $|I|-p$ is an integer.
Thus, $k$ is even.
Since $|E| = kn/2$ is odd, this implies that $n$ is odd, that is, $q\in \{1,3\}$.
If $q = 1$, then \eqref{eq:0} implies \[0\le 1/4-1/(2k)\le |I|-p\le 1/4+1/(2k)\le 1/2\,.\]
Since $|I|-p$ is an integer, this is only possible if $|I| = p$ and $k = 2$.
Thus, $G$ is $2$-regular and since $n$ is odd, $n\not\equiv 2~({\rm mod}~4)$.

Finally, if $q = 3$, then \eqref{eq:0} implies \[1/2\le 3/4-1/(2k)\le |I|-p\le 3/4+1/(2k)\le 1\,.\]
Since $|I|-p$ is an integer, this is only possible if $|I| = p+1$ and $k = 2$.
Thus, $G$ is $2$-regular and since $n$ is odd, $n\not\equiv 2~({\rm mod}~4)$.
\end{proof}

We now summarize the main consequences of \Cref{lem:regular-sb} for simple balanceability of $k$-regular graphs for $k \le 4$.
Dailly, Hansberg, and Ventura~\cite{main} showed that for even $n\ge 4$, the $n$-vertex cycle is balanceable if and only if $n\equiv 0~({\rm mod}~4)$.
Using \Cref{lem:regular-sb}, this result can be generalized as follows.

\begin{sloppypar}
\begin{corollary}\label{thm:2-regular}
Let $G$ be a $2$-regular graph of order $n$.
Then, the following statements are equivalent.
\begin{enumerate}
    \item $G$ is balanceable.
    \item $G$ is simply balanceable.
    \item $n\not\equiv 2~({\rm mod}~4)$.
\end{enumerate}
\end{corollary}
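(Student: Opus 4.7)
The plan is to organize the proof as a short cycle of implications, leveraging the three key results already available in the paper. Specifically, I would prove $(2) \Leftrightarrow (3)$ by directly invoking \Cref{lem:regular-sb}, then $(2) \Rightarrow (1)$ by invoking \Cref{indep}, and finally close the loop with $(1) \Rightarrow (3)$, which is the only implication requiring a genuine (if brief) argument.

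First, the equivalence $(2) \Leftrightarrow (3)$ is immediate from \Cref{lem:regular-sb}: for $k=2$, the theorem says that a $2$-regular graph on $n$ vertices is simply balanceable if and only if $n \not\equiv 2 \pmod 4$ (the second alternative of the theorem, requiring $k \ge 3$, does not apply). The implication $(2) \Rightarrow (1)$ is exactly the content of \Cref{indep}.

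For the remaining implication $(1) \Rightarrow (3)$, I would argue by contrapositive. Suppose $n \equiv 2 \pmod 4$. Since $G$ is $2$-regular, every vertex has even degree, so $G$ is an \emph{even} graph in the sense defined just before \Cref{prop:Eulerian}. Also, $|E(G)| = n$ (since $G$ is $2$-regular of order $n$), so $|E(G)|$ is even but not a multiple of $4$. \Cref{prop:Eulerian} says that if a balanceable even graph has $|E|$ even, then $|E|$ must be a multiple of $4$; hence $G$ cannot be balanceable, establishing $\lnot(3) \Rightarrow \lnot(1)$.

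There is no real obstacle here; the corollary is essentially a packaging of \Cref{lem:regular-sb} (for the positive direction) together with the Eulerian obstruction of \Cref{prop:Eulerian} (for the negative direction). The only thing to be slightly careful about is to note that $|E(G)| = n$ for a $2$-regular graph, so that the parity/divisibility of $n$ directly controls that of $|E(G)|$, enabling \Cref{prop:Eulerian} to be applied.
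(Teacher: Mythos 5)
Your proposal is correct and follows essentially the same route as the paper: both establish $|E(G)|=n$, use \Cref{prop:Eulerian} for the implication from balanceability to $n\not\equiv 2 \pmod 4$, \Cref{lem:regular-sb} to get simple balanceability from that congruence condition, and \Cref{indep} to close the cycle. The only cosmetic difference is that you state $(2)\Leftrightarrow(3)$ as a full equivalence where the paper only needs one direction of it in its implication cycle.
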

\end{sloppypar}

\begin{proof}
Let $G=(V,E)$ be a $2$-regular graph and let $n$ be the order of $G$.
Since $G$ is $2$-regular, we have $2|V|=\sum_{x\in V}d(x) = 2|E|$ and hence $|E| = n$.
If $G$ is balanceable, then, by \Cref{prop:Eulerian}, $n = |E|$ is either odd or a multiple of $4$, that is, \hbox{$n\not\equiv 2~({\rm mod}~4)$}.
If \hbox{$n\not\equiv 2~({\rm mod}~4)$}, then $G$ is simply balanceable by \Cref{lem:regular-sb}.
Finally, if $G$ is simply balanceable, then $G$ is balanceable by \Cref{indep}.
\end{proof}

\begin{remark}\label{remark}
\Cref{thm:2-regular} shows, in particular, that the classes of balanceable and simply balanceable graphs are not closed under disjoint union: the cycle $C_3$ is simply balanceable, while the disjoint union of two copies of $C_3$ is not balanceable.
Similarly, a graph may be balanceable (or even simply balanceable) even if none of its components are balanceable.
This is the case for the graph consisting of the disjoint union of two copies of $C_6$.
\hfill $\blacktriangle$
\end{remark}

\begin{corollary}\label{lem:k23n0mod4}
Let $G$ be a cubic graph of order $n$.
Then $G$ is simply balanceable if and only if $n\equiv 0~({\rm mod}~4)$.
\end{corollary}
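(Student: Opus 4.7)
The plan is to derive the corollary by specializing \Cref{lem:regular-sb} to the case $k = 3$. Under this specialization, the equivalence in \Cref{lem:regular-sb} reads: a cubic graph $G$ of order $n$ is simply balanceable if and only if $n \equiv 0~({\rm mod}~4)$ and $\alpha(G) \ge n/4$. Hence the "only if" direction of the corollary is immediate, and what remains is the "if" direction: to show that whenever $G$ is cubic with $n \equiv 0~({\rm mod}~4)$, the independence number bound $\alpha(G) \ge n/4$ automatically holds.

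The key step is therefore a uniform lower bound $\alpha(G) \ge n/4$ that applies to every cubic graph, with no congruence hypothesis needed. I would obtain this bound component by component using Brooks' theorem. Let $C_1,\dots,C_r$ be the connected components of $G$; each $C_i$ is connected and $3$-regular, so in particular it is not an odd cycle. Brooks' theorem then yields two cases: either $C_i$ is a complete graph, in which case $C_i = K_4$ and $\alpha(C_i) = 1 = |V(C_i)|/4$; or $\chi(C_i) \le 3$, in which case a largest color class gives $\alpha(C_i) \ge |V(C_i)|/3 \ge |V(C_i)|/4$. In either case $\alpha(C_i) \ge |V(C_i)|/4$, and summing over components using $\alpha(G) = \sum_i \alpha(C_i)$ gives $\alpha(G) \ge n/4$.

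Combining this bound with \Cref{lem:regular-sb} finishes the "if" direction and hence the corollary. I do not anticipate a real obstacle here; the only point that requires a brief comment is the use of Brooks' theorem on each component, specifically noting that a connected cubic graph cannot be an odd cycle (as cycles are $2$-regular), so the only excluded case in Brooks' theorem that can arise is $K_4$, and that case satisfies the bound with equality.
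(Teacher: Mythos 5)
Your proposal is correct and follows essentially the same route as the paper: specialize \Cref{lem:regular-sb} to $k=3$ and observe that every cubic graph satisfies $\alpha(G)\ge n/4$. The only difference is that the paper obtains this bound more cheaply from the greedy-coloring inequality $\chi(G)\le \Delta(G)+1=4$, so your component-by-component application of Brooks' theorem (with the $K_4$ case distinction) is valid but unnecessary overhead.
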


\begin{proof}
Let $G = (V,E)$ be a cubic graph of order $n$.
By \Cref{lem:regular-sb}, it suffices to show that $\alpha(G)\geq n/4$.
But this follows from the fact that $\chi(G)\leq \Delta(G)+1 \le 4$.
\end{proof}

\begin{sloppypar}
\begin{corollary}\label{lem:connected}
Let $G$ be a  connected $4$-regular graph of order $n$.
Then $G$ is simply balanceable if and only if \hbox{$n\equiv 0~({\rm mod}~4)$}.
\end{corollary}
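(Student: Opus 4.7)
The plan is to deduce \Cref{lem:connected} directly from \Cref{lem:regular-sb} by using Brooks' theorem to verify the independence number condition $\alpha(G)\ge n/4$.

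First, observe that since $G$ is $4$-regular, the case $k=2$ of \Cref{lem:regular-sb} does not apply, so $G$ is simply balanceable if and only if $n\equiv 0~({\rm mod}~4)$ and $\alpha(G)\ge n/4$. The forward direction is therefore immediate: if $G$ is simply balanceable, then $n\equiv 0~({\rm mod}~4)$.

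For the reverse direction, assume $n\equiv 0~({\rm mod}~4)$. By \Cref{lem:regular-sb}, it suffices to show that $\alpha(G)\ge n/4$, and for this it is enough to exhibit a proper $4$-coloring of $G$, since then the largest color class is an independent set of size at least $n/\chi(G)\ge n/4$. Here is where Brooks' theorem enters: the hypothesis that $G$ is connected, together with $\Delta(G)=4$, guarantees $\chi(G)\le 4$ unless $G$ is an odd cycle or a complete graph. A $4$-regular graph cannot be an odd cycle (which is $2$-regular), and the only $4$-regular complete graph is $K_5$, whose order is $5\not\equiv 0~({\rm mod}~4)$. Consequently, under the assumption $n\equiv 0~({\rm mod}~4)$, Brooks' theorem yields $\chi(G)\le 4$, so $\alpha(G)\ge n/4$, and \Cref{lem:regular-sb} concludes that $G$ is simply balanceable.

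The argument is essentially a one-line appeal to Brooks' theorem once \Cref{lem:regular-sb} has been proved, and the only mild subtlety is ruling out the exceptional case $G=K_5$, which is ruled out for free by the divisibility condition $n\equiv 0~({\rm mod}~4)$. There is no real obstacle; the corollary is a clean combination of \Cref{lem:regular-sb} and Brooks' theorem, entirely analogous to \Cref{lem:k23n0mod4} for cubic graphs, with the connectedness hypothesis playing exactly the role of activating Brooks' theorem.
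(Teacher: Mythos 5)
Your proof is correct and follows essentially the same route as the paper: apply \Cref{lem:regular-sb} and verify $\alpha(G)\ge n/4$ via Brooks' theorem, after excluding the odd-cycle and complete-graph exceptions. Your handling of the $K_5$ case (ruling it out via the hypothesis $n\equiv 0~({\rm mod}~4)$ in the reverse direction) is in fact slightly more careful than the paper's, which dismisses completeness in one line, but the substance is identical.
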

\end{sloppypar}

\begin{proof}
Let $G = (V,E)$ be a connected $4$-regular graph of order $n$.
By \cref{lem:regular-sb}, it suffices to show that $\alpha(G)\ge n/4$.
Since $K_5$ is the only $4$-regular complete graph, $G$ is not complete.
Since $G$ is also not an odd cycle, Brooks' theorem implies that $\chi(G)\leq 4$ and hence, the independence number $\alpha(G)$ is at least $n/4$.
\end{proof}

Note that the connectedness assumption in \Cref{lem:connected} is necessary.
For example, let $G$ be the graph consisting of the disjoint union of four complete graphs $K_5$.
Then, $G$ is a $4$-regular graph of order \hbox{$n = 20$}.
Even though $n\equiv 0~({\rm mod}~4)$, we infer using \cref{lem:regular-sb} that $G$ is not simply balanceable, since the independence number of $G$ satisfies $\alpha(G) = 4<n/4$.

The above examples indicate that when studying balanceability and simple balanceability of graphs in a certain class, extending the result from the connected case to the general case may not always be straightforward.
While the extension was simple in the case of $2$-regular graphs and---as we will see in \cref{cubic}---is also simple in the $3$-regular case, the $4$-regular case turns out to be significantly more involved (see, e.g., the proof of~\Cref{Thm:4-regular}).

\medskip
The connection between simple balanceability of regular graphs and their independence numbers established in \cref{lem:regular-sb} suggests that the problem of recognizing simply balanceable graphs might be \NP-complete.
Before returning to the study of balanceable graphs, we prove in the next section that this is indeed the case.

\section{Recognizing simply balanceable graphs is \NP-complete}\label{sec:complexity}

Our \NP-completeness proof is based on a reduction from the following problem, proved \NP-complete by Leven and Galil~\cite{zbMATH03804843}: Given a $4$-regular graph $G$, is $G$ $4$-edge-colorable?

We will also use the following well-known fact relating $p$-colorability of a graph with the independence number of the Cartesian product of the graph with the complete graph $K_p$ (see, e.g.,~\cite[p.~380]{zbMATH03400923}).

\begin{proposition}\label{prop:p-colorable}
For any $n$-vertex graph $H$ and positive integer $p$, the graph $H$ is $p$-colorable if and only if $\alpha(H\Box K_p) = n$.
\end{proposition}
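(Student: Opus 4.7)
The plan is to exploit the product structure of $H\Box K_p$ directly and set up a bijection between proper $p$-colorings of $H$ and independent sets of size $n$ in the product.

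First I would unpack the definition of $H\Box K_p$ on the vertex set $V(H)\times\{1,\dots,p\}$: for a fixed $x\in V(H)$, the ``row'' $R_x=\{(x,1),\dots,(x,p)\}$ induces a clique (a copy of $K_p$), while for a fixed $i\in\{1,\dots,p\}$, the ``column'' $C_i=\{(x,i):x\in V(H)\}$ induces a copy of $H$. This immediately yields the upper bound: any independent set $I$ in $H\Box K_p$ meets each row $R_x$ in at most one vertex, so $|I|\leq n$, giving $\alpha(H\Box K_p)\leq n$.

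Next I would prove the equivalence by establishing a correspondence. Given any independent set $I$ of size exactly $n$, the bound forces $I$ to contain exactly one vertex from each row $R_x$; write this vertex as $(x,c(x))$, so that $c$ is a well-defined map $V(H)\to\{1,\dots,p\}$. For any edge $xy\in E(H)$, the vertices $(x,c(x))$ and $(y,c(y))$ lie in $I$, and they are adjacent in $H\Box K_p$ precisely when $c(x)=c(y)$; since $I$ is independent, this forces $c(x)\neq c(y)$, so $c$ is a proper $p$-coloring of $H$. Conversely, given a proper $p$-coloring $c$ of $H$, I would verify that the set $I_c=\{(x,c(x)):x\in V(H)\}$ meets each row in exactly one vertex (so no ``row edge'' of $H\Box K_p$ is induced) and, by properness of $c$, contains no two vertices $(x,i),(y,i)$ with $xy\in E(H)$ (so no ``column edge'' is induced either); hence $I_c$ is an independent set of size $n$.

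Combining these two directions yields $\alpha(H\Box K_p)=n$ if and only if $H$ admits a proper $p$-coloring, which is the claim. I do not expect any genuine obstacle: the argument is a short structural observation, and the only thing to be careful about is distinguishing the two types of edges in the Cartesian product when checking independence in each direction.
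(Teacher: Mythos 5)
Your proof is correct and complete. The paper does not actually prove \cref{prop:p-colorable}; it cites it as a well-known fact from Berge's book, and your argument (rows of $H\Box K_p$ are $p$-cliques forcing $\alpha \le n$, with size-$n$ independent sets corresponding bijectively to proper $p$-colorings via the selector map $x \mapsto c(x)$) is exactly the standard proof one would supply.
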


\begin{theorem}\label{NP-complete}
Determining whether a given $9$-regular graph is simply balanceable is \NP-complete.
\end{theorem}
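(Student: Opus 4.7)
The plan is to reduce from the \NP-complete problem of deciding whether a given $4$-regular graph is $4$-edge-colorable, proved \NP-complete by Leven and Galil~\cite{zbMATH03804843}. Given a $4$-regular graph $G$ on $m$ vertices, the constructed instance will be the graph $H = L(G)\Box K_4$. Since $G$ is $4$-regular, the line graph $L(G)$ has $2m$ vertices and is $6$-regular (every edge of $G$ shares an endpoint with exactly $(4-1)+(4-1) = 6$ other edges). The Cartesian product of a $6$-regular and a $3$-regular graph is $9$-regular, so $H$ is $9$-regular with $n' := |V(H)| = 8m$ vertices, and in particular $n'\equiv 0~({\rm mod}~4)$. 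The construction is clearly computable in polynomial time, and membership in \NP~is immediate: a suitable independent set certifies simple balanceability by \Cref{def:sb}.

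Having $n'\equiv 0~({\rm mod}~4)$ and $k = 9\ge 3$, \Cref{lem:regular-sb} tells us that $H$ is simply balanceable if and only if $\alpha(H)\ge n'/4 = 2m$. I would then use \Cref{prop:p-colorable} with the line graph: $L(G)$ is $4$-colorable if and only if $\alpha(L(G)\Box K_4) = |V(L(G))| = 2m$. The point to stress is that the ``$\le$'' direction of the bound $\alpha(L(G)\Box K_4)\le 2m$ holds universally, because the $2m$ copies $\{v\}\times V(K_4)$ partition $V(H)$ into cliques of size $4$, each of which can contribute at most one vertex to any independent set. Combining this upper bound with \Cref{prop:p-colorable} gives the equivalence
\[
\alpha(H) \ge 2m \iff \alpha(H) = 2m \iff L(G) \text{ is } 4\text{-colorable}.
\]

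Finally, $L(G)$ is $4$-colorable if and only if $G$ is $4$-edge-colorable, by the standard correspondence between edge colorings of $G$ and vertex colorings of $L(G)$. Chaining the equivalences, $H$ is simply balanceable if and only if $G$ is $4$-edge-colorable, completing the reduction. I do not foresee a genuine obstacle; the only item that requires care is the bookkeeping showing that $H$ is exactly $9$-regular with order divisible by $4$, so that \Cref{lem:regular-sb} applies and reduces simple balanceability to a clean independence-number condition which \Cref{prop:p-colorable} converts into $4$-edge-colorability of $G$.
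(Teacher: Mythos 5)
Your proposal is correct and follows essentially the same route as the paper: a reduction from $4$-edge-colorability of $4$-regular graphs via the $9$-regular graph $L(G)\Box K_4$, using \Cref{lem:regular-sb} to translate simple balanceability into the condition $\alpha(L(G)\Box K_4)\ge |V(L(G))|$ and \Cref{prop:p-colorable} to convert that into $4$-colorability of $L(G)$. The only cosmetic difference is that you let $m$ count the vertices of $G$ while the paper lets $m$ count its edges; the bookkeeping is equivalent.
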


\begin{proof}
Clearly, the problem belongs to \NP.
Given an independent set $I$ satisfying the condition from the definition of simply balanceable graphs, it can be verified in polynomial time whether $I$ is indeed an independent set satisfying the condition.

To show \NP-hardness, we make a reduction from the problem of $4$-edge-coloring $4$-regular graphs.
Given a $4$-regular graph $G$, compute the line graph $L(G)$ and let $G'$ be the Cartesian product of $L(G)$ with the complete graph $K_4$.
In other words, take four copies of $L(G)$, take their disjoint union, and for each vertex of $L(G)$, add edges between any two copies of the vertex.
Since each edge of $G$ shares an endpoint with precisely $6$ other edges, the line graph of $G$ is $6$-regular.
Consequently, each vertex of $G'$ is adjacent to $6$ other vertices in the corresponding copy of $L(G)$ and to $3$ other copies of the vertex in $G'$, making $G'$ a $9$-regular graph.
Let us denote by $m$ the number of edges of $G$.
Since the vertex set of $G'$ is partitioned into $|V(L(G))| = m$ many $4$-cliques, the independence number of $G'$ is at most $m$.
Furthermore, the number of vertices of $G'$ is equal to $4m$; in particular, $|V(G')|\equiv 0~({\rm mod}~4)$.
By \Cref{lem:regular-sb}, $G'$ is simply balanceable if and only if $\alpha(G')\ge m$.

To complete the proof, we show that $G$ is $4$-edge-colorable if and only if $\alpha(G')\ge m$.
Note first that $G$ is $4$-edge-colorable if and only if $L(G)$ is $4$-colorable.
Since $G' = L(G)\Box K_4$, it follows from \cref{prop:p-colorable} that $L(G)$ is $4$-colorable if and only if $\alpha(G') = |V(L(G))| = m$.
As we already observed that $\alpha(G')\le m$, we conclude that $G$ is $4$-edge-colorable if and only if $\alpha(G') \ge m$.
\end{proof}

\section{Balanceability of cubic graphs}\label{cubic}

As mentioned in the introduction, one of the aims of this paper is to study the balanceability of cubic graphs, addressing a question of Dailly, Hansberg, and Ventura~\cite{main}.
The following theorem confirms that cubic graphs are balanceable.

\begin{theorem}\label{Thm:cubic graph}
Every cubic graph is balanceable.
\end{theorem}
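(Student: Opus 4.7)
The plan is to split into cases based on $n \bmod 4$ and connectivity, using \Cref{lem:k23n0mod4}, \Cref{indep}, and \Cref{lem:join} as the main tools.

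First, I would reduce to the connected case. For any cubic graph $G$, every connected component is cubic and hence has even order. Components of order $\equiv 0 \pmod 4$ are simply balanceable by \Cref{lem:k23n0mod4}, hence balanceable by \Cref{indep}, and have an even number of edges. Components of order $\equiv 2 \pmod 4$ can be grouped into arbitrary pairs; each pair is a cubic subgraph of order $\equiv 0 \pmod 4$, which is again simply balanceable by \Cref{lem:k23n0mod4} and has an even number of edges. Applying \Cref{lem:join} iteratively, the problem reduces to proving balanceability of at most one leftover connected cubic graph. When this leftover has order divisible by $4$, we are done immediately by \Cref{lem:k23n0mod4} and \Cref{indep}.

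The main case is therefore a connected cubic graph $G$ of order $n = 4k+2$ for some $k \ge 1$, so that $|E| = 6k+3$ is odd. By \Cref{partition}, balanceability requires a set $W \subseteq V(G)$ with $e_G(W) \in \{3k+1, 3k+2\}$ and a bipartition $\{X, Y\}$ of $V(G)$ with $e_G(X, Y) \in \{3k+1, 3k+2\}$. Since $n \ge 6$, $G \ne K_4$, so Brooks' theorem gives $\chi(G) \le 3$ and hence $\alpha(G) \ge \lceil n/3 \rceil \ge k+1$. I would fix an independent set $I$ of size $k+1$, find a vertex $w \in V(G) \setminus I$ with $d_I(w) \in \{1,2\}$, and choose a non-neighbor $v \in I$ of $w$. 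Setting $W := (V(G) \setminus I) \cup \{v\} \setminus \{w\}$, a direct computation using $e_G(V(G) \setminus I) = |E| - 3|I| = 3k$ and the fact that $v \in I$ has all three neighbors in $V(G) \setminus I$ yields $e_G(W) = 3k + 3 - d_{V(G) \setminus I}(w) \in \{3k+1, 3k+2\}$. For the bipartition, taking $X := I \cup \{w\}$ when $d_I(w) = 2$ yields cut $3k+2$, while $X := (I \setminus \{v\}) \cup \{w\}$ when $d_I(w) = 1$ yields cut $3k+1$.

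The main obstacle is ensuring the existence of the vertex $w$ with $d_I(w) \in \{1,2\}$ together with a non-neighbor $v \in I$. For the existence of $w$, I would argue by contradiction: if every $w \in V(G) \setminus I$ had $d_I(w) \in \{0, 3\}$, then $V(G) \setminus I$ would split into $A = \{w : d_I(w) = 3\}$ and $C = \{w : d_I(w) = 0\}$; counting edges from $I$ gives $3|A| = 3(k+1)$, hence $|A| = k+1$ and $|C| = 2k$. The vertices in $A$ would then have all three neighbors in $I$ (so no neighbors in $V(G) \setminus I$), and in turn the vertices in $C$ could have no neighbors in $A$, forcing $C$ to induce a cubic subgraph disconnected from $I \cup A$ --- contradicting connectivity of $G$ for $k \ge 1$. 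The non-neighbor $v$ exists whenever $|I| > d_I(w)$, which covers all cases except possibly $k = 1$ with $d_I(w) = 2$. The edge case $k = 1$ (so $n = 6$) reduces to the two cubic graphs on six vertices --- $K_{3,3}$ and the triangular prism --- and can be verified by direct inspection (for $K_{3,3}$, choose $W = V(G) \setminus \{u_1, u_2\}$ for any edge $u_1u_2$, giving $e_G(W) = 4 = 3k+1$, together with $X = I \cup \{w\}$ for any $w$ with $d_I(w) = 2$, giving cut $5$).
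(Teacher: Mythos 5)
Your proof is correct, but for the main case it takes a genuinely different route from the paper. The two arguments agree on the case $n\equiv 0\pmod 4$ (via \Cref{lem:k23n0mod4} and \Cref{indep}), but for $n=4k+2$ the paper gives a single uniform construction that needs neither connectivity nor Brooks' theorem: fix an edge $xy$, take an independent set $I$ of size $k-1$ in $G-N[\{x,y\}]$ (which exists already by greedy $4$-colorability, since $|V(G-N[\{x,y\}])|\ge n-6$), and set $X=I\cup\{x,y\}$. Then $G[X]$ contains exactly the one edge $xy$, so $e_G(X,V\setminus X)=3(k-1)+4=3k+1$ and $e_G(V\setminus X)=(6k+3)-1-(3k+1)=3k+1$, producing the set $W$ and the partition of \Cref{partition} in one stroke, with no case analysis and no exceptional small graphs, and working verbatim for disconnected $G$. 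You instead first reduce to the connected case via \Cref{lem:join} (valid, since each discarded block is a cubic graph of order divisible by $4$ and hence has an even number of edges), then perturb a maximum independent set: this forces the counting argument for the existence of a vertex $w$ with $d_I(w)\in\{1,2\}$ (which genuinely needs connectivity), a case split on $d_I(w)$, and the separate treatment of $n=6$ where a non-neighbor $v\in I$ of $w$ may fail to exist (as it does for $K_{3,3}$). All of these steps check out --- in particular your edge counts $e_G(W)=3k+d_I(w)$ and the two cut values $3k+1$ and $3k+2$ are correct, and $K_{3,3}$ and the triangular prism are indeed the only cubic graphs on six vertices --- so the argument is complete; it is simply longer and more case-ridden than the paper's, whose key idea is to seed the independent set with a single prescribed edge so that both required quantities become exactly computable at once.
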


\begin{proof}
Let $G=(V,E)$ be a cubic graph and let $n$ be the order of $G$.
Since $G$ is cubic, $n$ is even.
We consider two cases depending on the value of $n\bmod 4$.

\paragraph{Case 1.} $n\equiv 0~({\rm mod}~4)$.
In this case, $G$ is balanceable by \Cref{indep,lem:k23n0mod4}.

\paragraph{Case 2:} $n\equiv 2~({\rm mod}~4)$.
Let $k$ be the positive integer such that $n=4k+2$.
Then $|E| = 6k+3$.
Our goal is to show that the conditions of \Cref{partition} are satisfied.
In fact, we will show that $G$ has a set of vertices $W \subseteq V$ and a partition $V=\{X,Y\}$ such that $e_G(W)=e_G(X, Y) = 3k+1$.

First, we show that there exists a partition $\{X,Y\}$ of $V$ such that $e_G(X,Y)=3k+1$.
We are going to construct $X$.
Fix an arbitrary edge $xy\in E$.
Let $J=\{x,y\}$ and $H=G-N[J]$.
Since the graph $G$ is cubic, $|N[J]|\le 6$ and hence, $|V(H)|\ge n-6$.
As $H$ is a subgraph of $G$, we have $\chi(H)\leq\chi(G)\leq 4$, which implies that $\alpha(H)\ge  |V(H)|/4\geq (n-6)/4=k-1$; in particular, there exists an independent set $I$ in $H$ such that $|I| = k-1$.
Let $X = I\cup J$ and $Y= V\setminus X$; see \Cref{Fig:cubic graph}.

\begin{figure}[h!]
    \centering
		\begin{tikzpicture}[scale=1.3]
\draw [rotate around={81.8698976458454:(-1.56,3.48)},line width=1.2pt] (-1.56,3.48) ellipse (1.82786947059178cm and 1.6189832616557454cm);
\draw [rotate around={81.86989764584541:(2.02,3.56)},line width=1.2pt] (2.02,3.56) ellipse (1.8278694705917768cm and 1.6189832616557427cm);
\draw [line width=2.pt,dotted] (-3.090847313658136,4.028220675271917)-- (0.,4.044208981538004);
\draw [line width=0.8pt] (-1.5,4.886666666666666)-- (2.,5.2);
\draw [line width=0.8pt] (-1.5,4.886666666666666)-- (2.,5.);
\draw [line width=0.8pt] (-1.5,4.5)-- (2.,4.8);
\draw [line width=0.8pt] (-1.5,4.5)-- (2.,4.6);
\draw [line width=0.8pt] (-1.5133333333333343,3.686666666666666)-- (2.001672135052165,3.9117106151847634);
\draw [line width=0.8pt] (-1.5133333333333343,3.686666666666666)-- (2.001672135052165,3.7881638705323435);
\draw [line width=0.8pt] (-1.5,3.3)-- (1.9963005374585814,3.2348893183932463);
\draw [line width=0.8pt] (-1.5,3.3)-- (1.9963005374585814,3.11671417133441);
\draw [line width=0.8pt] (-1.5,4.886666666666666)-- (-1.5,4.5);
\draw [line width=0.8pt] (-1.4866666666666677,2.073333333333333)-- (1.990928939864998,2.3485757154519744);
\draw [line width=0.8pt] (-1.4866666666666677,2.073333333333333)-- (1.9963005374585814,2.2196573732059712);
\draw [line width=0.8pt] (-1.5133333333333343,3.686666666666666)-- (2.001672135052165,3.664617125879924);
\draw [line width=0.8pt] (-1.5,3.3)-- (1.9963005374585814,3.353064465452083);
\draw [line width=0.8pt] (-1.4866666666666677,2.073333333333333)-- (2.001672135052165,2.090739030959968);
\begin{scriptsize}
\draw [fill=black] (-1.5,4.886666666666666) circle (1.3pt);
\draw[color=black] (-1.4442688823907668,5.041287969232268) node {$x$};
\draw [fill=black] (-1.5,4.5) circle (1.3pt);
\draw[color=black] (-1.4442688823907668,4.3) node {$y$};
\draw [fill=black] (-1.5133333333333343,3.686666666666666) circle (1.3pt);
\draw [fill=black] (-1.5,3.3) circle (1.3pt);
\draw[color=black] (-1.4271668350085396,2.8864299990716606) node {$\vdots$};
\draw [fill=black] (-1.4866666666666677,2.073333333333333) circle (1.3pt);
\draw [fill=black] (2.,5.2) circle (1.3pt);
\draw [fill=black] (2.,5.) circle (1.3pt);
\draw [fill=black] (2.,4.8) circle (1.3pt);
\draw [fill=black] (2.,4.6) circle (1.3pt);
\draw [fill=black] (2.001672135052165,3.9117106151847634) circle (1.3pt);
\draw [fill=black] (2.001672135052165,3.7881638705323435) circle (1.3pt);
\draw [fill=black] (1.9963005374585814,3.2348893183932463) circle (1.3pt);
\draw [fill=black] (1.9963005374585814,3.11671417133441) circle (1.3pt);
\draw [fill=black] (1.990928939864998,2.3485757154519744) circle (1.3pt);
\draw [fill=black] (1.9963005374585814,2.2196573732059712) circle (1.3pt);

\draw[color=black] (-1.4271668350085396,1.2959395925245456) node {$X$};
\draw[color=black] (2.087303902039137,1.2788375451423184) node {$V\setminus X$};
\draw [fill=black] (2.001672135052165,3.664617125879924) circle (1.3pt);
\draw [fill=black] (1.9963005374585814,3.353064465452083) circle (1.3pt);
\draw [fill=black] (2.001672135052165,2.090739030959968) circle (1.3pt);
\draw [decorate,decoration={brace,amplitude=10pt,mirror,raise=4pt},yshift=0pt] (-3.2,4) -- (-3.2,1.5)  node [black,midway,xshift=-0.9cm] {\footnotesize $k-1$};
\end{scriptsize}
\end{tikzpicture}
\caption{Proof of Case 2 of \Cref{Thm:cubic graph} when $|N(\{x,y\})|=4$.
The subgraph of $G$ induced by $X$ has a unique edge $xy$ and $e_G(X,V\setminus X)=3k+1$.\label{Fig:cubic graph}}
\end{figure}
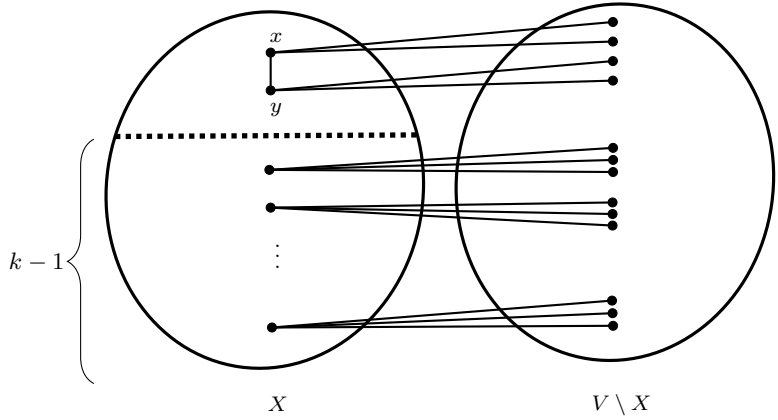

Note that $xy$ is the unique edge of the subgraph of $G$ induced by $X$.
Thus, using also the fact that $G$ is cubic, we obtain that \[e_G(X,Y) = e_G(I,Y)+ e_G(J,Y) = 3(k-1)+4 = 3k+1\,,\]
as claimed.

Second, to show that $G$ has a set of vertices $W \subseteq V$ such that $e_G(W)= 3k+1$, observe that the set $Y$ satisfies the desired condition.
Indeed,
\[e_G(Y) = |E|-e_G(X)-e_G(X,Y) = (6k+3)-1-(3k+1)=3k+1\,.\]
Applying \Cref{partition}, this completes the proof that $G$ is balanceable.
\end{proof}

\section{Balanceability of $4$-regular graphs}\label{4-regular}

\Cref{prop:Eulerian} implies the following.

\begin{corollary}\label{cor:4-regular}
No $4$-regular graph of odd order is balanceable.
\end{corollary}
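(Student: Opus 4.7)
The plan is to derive the corollary as a direct consequence of \Cref{prop:Eulerian} combined with a simple edge count. Let $G = (V,E)$ be a $4$-regular graph of odd order $n$. Since every vertex of $G$ has degree $4$, the graph $G$ is even, so \Cref{prop:Eulerian} applies to $G$.

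The key observation is to compute $|E|$ modulo $4$. By the handshake lemma, $|E| = 4n/2 = 2n$. Since $n$ is assumed to be odd, $2n$ is even and $2n \equiv 2 \pmod 4$. Thus $|E|$ is neither odd nor a multiple of $4$.

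Assume for contradiction that $G$ is balanceable. Then, by \Cref{prop:Eulerian} applied to the even graph $G$, the number $|E|$ must be either odd or a multiple of $4$, contradicting the previous paragraph. Therefore $G$ is not balanceable.

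There is no real obstacle here; the statement is a short contrapositive application of \Cref{prop:Eulerian}. The only thing to double-check is that the hypothesis of \Cref{prop:Eulerian} (that the graph is even, i.e., all degrees are even) is indeed met, which is immediate from $4$-regularity, and that the arithmetic $2n \equiv 2 \pmod 4$ for odd $n$ is stated cleanly.
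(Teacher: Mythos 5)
Your proof is correct and follows essentially the same route as the paper: both apply the handshake lemma to get $|E| = 2n$ and then invoke \Cref{prop:Eulerian} to conclude that $|E|$ must be a multiple of $4$, forcing $n$ to be even; the paper merely phrases this as a direct contrapositive rather than a proof by contradiction.
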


\begin{proof}
Let $G = (V,E)$ be a $4$-regular graph.
Then $4|V| = 2|E|$ and hence, $|V| = |E|/2$; in particular, $|E|$ is even.
By~\Cref{prop:Eulerian}, $|E|$ is a multiple of $4$.
Consequently $|V|$ is even.
\end{proof}

By~\Cref{cor:4-regular}, a necessary condition for the balanceability of a $4$-regular graph is that it has even order.
In the case of connected $4$-regular graphs of order $n\equiv 2~({\rm mod}~4)$, some graphs are balanceable and some are not (see~\cite{main}).
However, we next show that in the case $n\equiv 0~({\rm mod}~4)$ they are all balanceable.
Note that \cref{indep,lem:connected} imply the following.

\begin{lemma}\label{lem:connected-balanceable}
    Every connected $4$-regular graph of order $n$, where $n\equiv 0~({\rm mod}~4)$, is balanceable.
\end{lemma}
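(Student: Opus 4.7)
The plan is to derive \Cref{lem:connected-balanceable} directly from two results already established in the paper, with no new combinatorial content required. The hypothesis ``$G$ is a connected $4$-regular graph of order $n$ with $n\equiv 0~({\rm mod}~4)$'' matches exactly the setup of \Cref{lem:connected}, which characterizes simple balanceability of connected $4$-regular graphs by precisely this congruence on $n$. So the first step of the proof will simply quote \Cref{lem:connected} to deduce that $G$ is simply balanceable.

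The second step is to invoke \Cref{indep}, which states that every simply balanceable graph is balanceable. Chaining these two one-line implications yields the conclusion of the lemma immediately, so the full proof is essentially a two-sentence argument.

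There is no real obstacle here, since the statement is packaged as a direct corollary of previously proved results. The only point worth flagging in the write-up is that connectedness is genuinely used: it enters through \Cref{lem:connected}, whose proof leans on Brooks' theorem to secure the bound $\alpha(G)\ge n/4$ that \Cref{lem:regular-sb} requires. The example of four disjoint copies of $K_5$, already discussed after \Cref{lem:connected}, shows that simple balanceability can fail in the disconnected case even when $n\equiv 0~({\rm mod}~4)$. This is exactly why dropping the connectedness hypothesis, as accomplished in \Cref{Thm:4-regular}, requires a substantially different and more involved argument rather than the trivial chaining used here.
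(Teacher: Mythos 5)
Your proposal is correct and matches the paper's own derivation exactly: the paper states this lemma as an immediate consequence of \Cref{indep} and \Cref{lem:connected}, which is precisely the two-step chaining you describe. Your additional remarks on why connectedness is needed are accurate but not required for the proof itself.
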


For the general case, we will make use of the following consequence of \Cref{lem:join}.

\begin{corollary}\label{cor:join-4-regular}
Let $G$ and $H$ be balanceable graphs such that $H$ is $4$-regular.
Then $G+H$ is balanceable.
\end{corollary}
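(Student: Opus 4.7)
The plan is to deduce this directly from Lemma~\ref{lem:join}, whose only additional hypothesis beyond balanceability of the two summands is that $|E(H)|$ be even. So the entire task reduces to verifying that a $4$-regular graph has an even number of edges, which is immediate from the handshaking lemma: if $H$ is $4$-regular then
\[
|E(H)| \;=\; \tfrac{1}{2}\sum_{v\in V(H)} d_H(v) \;=\; 2|V(H)|,
\]
which is even. (In fact, since $H$ is assumed balanceable and $4$-regular, Corollary~\ref{cor:4-regular} yields that $|V(H)|$ is even, and hence $|E(H)|$ is a multiple of $4$, but evenness alone is all we need here.)

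Having established that $|E(H)|$ is even, the hypothesis of Lemma~\ref{lem:join} is satisfied for the pair $G,H$, and the conclusion that $G+H$ is balanceable follows at once. There is no real obstacle: the corollary is essentially a one-line consequence of Lemma~\ref{lem:join} together with the handshaking identity, and the proof will consist of those two observations in that order.
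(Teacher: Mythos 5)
Your proof is correct and matches the paper's intended argument: the paper states this corollary as an immediate consequence of Lemma~\ref{lem:join} (leaving the proof implicit), and the only thing to check is that a $4$-regular graph has an even number of edges, which you verify via the handshaking lemma exactly as intended.
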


We will also need the following sufficient condition for balanceability of $4$-regular graphs.

\begin{lemma}\label{lem:k4n0mod4}
Every $4$-colorable $4$-regular graph of order $n$, where $n\equiv 0~({\rm mod}~4)$,  is balanceable.
\end{lemma}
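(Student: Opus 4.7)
My plan is to derive this as an immediate consequence of the characterization of simply balanceable regular graphs (\Cref{lem:regular-sb}) together with the fact that simple balanceability implies balanceability (\Cref{indep}). The core observation is that $4$-colorability of an $n$-vertex graph forces a large independent set, which is exactly what the second condition of \Cref{lem:regular-sb} requires.

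In more detail, let $G=(V,E)$ be a $4$-colorable $4$-regular graph of order $n$ with $n\equiv 0~({\rm mod}~4)$. First, I would invoke the definition of $4$-colorability to write $V$ as a union of four independent sets $V_1,V_2,V_3,V_4$. By an averaging argument, at least one of these sets, say $V_i$, satisfies $|V_i|\ge n/4$, and hence $\alpha(G)\ge n/4$. Next, since $k=4\ge 3$ and $n\equiv 0~({\rm mod}~4)$, the second condition in the statement of \Cref{lem:regular-sb} is met, so $G$ is simply balanceable. Finally, by \Cref{indep}, $G$ is balanceable, which is the desired conclusion.

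There is essentially no obstacle here: all the work has already been done in \Cref{lem:regular-sb}. The only thing to verify carefully is the implication from $\chi(G)\le 4$ to $\alpha(G)\ge n/4$, which is a one-line averaging, and the fact that the hypothesis $n\equiv 0~({\rm mod}~4)$ lines up precisely with the second clause of the characterization (so no case split is necessary). The lemma will be used later to handle the disconnected case of \Cref{Thm:4-regular}, where one needs to extract a $4$-colorable part and combine with \Cref{cor:join-4-regular}.
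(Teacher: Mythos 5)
Your proof is correct and matches the paper's intended argument: the paper omits the proof of this lemma, noting it is the same as that of \Cref{lem:k23n0mod4}, which likewise deduces $\alpha(G)\ge n/4$ from $4$-colorability, applies \Cref{lem:regular-sb} to get simple balanceability, and concludes via \Cref{indep}.
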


We omit the proof of \cref{lem:k4n0mod4}, since it is the same as the proof of~\cref{lem:k23n0mod4}.

\begin{theorem} \label{Thm:4-regular}
Every $4$-regular graph of order $n$, where $n\equiv 0~({\rm mod}~4)$, is balanceable.
\end{theorem}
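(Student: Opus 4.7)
The plan is to analyze $G$ component by component. Each connected component of $G$ is itself a connected $4$-regular graph, and by Brooks' theorem every such component is $4$-colorable unless it is isomorphic to $K_5$. I would let $s$ denote the number of components of $G$ isomorphic to $K_5$, and let $H$ be the union of the remaining components, so that $G = H + sK_5$, $H$ is $4$-regular and $4$-colorable, and $|V(H)| = n - 5s$. The proof then splits on the value of $s$.

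For $s \le 3$, my goal is to show that $G$ is already simply balanceable and then conclude via \Cref{lem:regular-sb} and \Cref{indep}. Since $H$ is $4$-colorable, $\alpha(H) \ge \lceil |V(H)|/4 \rceil = \lceil (n-5s)/4 \rceil$, and combined with $\alpha(sK_5) = s$ this yields $\alpha(G) \ge s + \lceil (n-5s)/4 \rceil$. Using $4 \mid n$, a short arithmetic check confirms that this lower bound equals $n/4$ for each $s \in \{0,1,2,3\}$, so $\alpha(G) \ge n/4$ and $G$ is simply balanceable.

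For $s \ge 4$, I would reduce to the previous case by grouping $K_5$-components into blocks of four. The crucial auxiliary fact is that $4K_5$ is itself balanceable, which I would verify directly from \Cref{partition}: take $W$ to be the union of two of the four $K_5$'s, so $e_G(W) = 20 = |E(4K_5)|/2$; and for the cut, split two of the $K_5$'s as $1+4$ (each contributing $4$ edges to the cut) and the other two as $2+3$ (each contributing $6$), giving cut size $2 \cdot 4 + 2 \cdot 6 = 20$. Writing $s = 4q + s'$ with $s' \in \{0,1,2,3\}$, let $G' = H + s'K_5$; this is a $4$-regular graph of order $n - 20q \equiv 0 \pmod 4$ with at most three $K_5$-components, hence balanceable by the $s \le 3$ analysis. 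Since $G = G' + q \cdot (4K_5)$ and $4K_5$ is balanceable and $4$-regular, $q$ iterated applications of \Cref{cor:join-4-regular} show that $G$ is balanceable.

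The principal obstacle is the regime $s \ge 4$: there the independence-number bound falls short of $n/4$, so $G$ need not be simply balanceable, and one has to exhibit a witness of balanceability that is not merely an externally balanced independent set. The key insight that makes the reduction go through is that, although a single $K_5$ is not even balanceable, the disjoint union $4K_5$ is; this allows one to peel off $K_5$-components in blocks of four, applying \Cref{cor:join-4-regular} to absorb them into any balanceable graph obtained on the remaining components.
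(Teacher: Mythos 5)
Your proposal is correct, and it takes a genuinely different route from the paper's. The paper argues by minimal counterexample: it first shows that in a minimal non-balanceable example no proper nonempty union of components can have order divisible by $4$ (else one splits the graph and applies \Cref{cor:join-4-regular}), and then runs a sequence of claims constraining the number $a$ of $K_5$-components and the residues modulo $4$ of the orders of the remaining components, in each surviving configuration exhibiting an explicit independent set of size $n/4$ built from a Brooks coloring plus one vertex per $K_5$. Your proof uses the same ingredients (Brooks' theorem, the balanceability of $4K_5$, and \Cref{cor:join-4-regular}) but reorganizes them into a direct two-case argument: for $s\le 3$ copies of $K_5$, the additivity of the independence number gives $\alpha(G)\ge s+\lceil (n-5s)/4\rceil=n/4$ (the arithmetic is right for each $s\in\{0,1,2,3\}$ when $4\mid n$), so $G$ is simply balanceable by \Cref{lem:regular-sb} and balanceable by \Cref{indep}; for $s\ge 4$ you peel off blocks of $4K_5$, whose balanceability you verify directly from \Cref{partition} (the paper obtains the same fact inside one of its claims via $2K_5$ and \Cref{lem:join}). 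This buys a shorter proof with no minimality bookkeeping and no case analysis on the residues of the component orders. The only point to tidy is the degenerate case $G=4qK_5$, where $H$ is empty and $s'=0$: there $G'$ is the empty graph, so you should start the iteration from a single copy of $4K_5$ and apply \Cref{cor:join-4-regular} only $q-1$ times.
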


\begin{proof}
Suppose for a contradiction that there exists a $4$-regular graph of order $n$ divisible by $4$ that is not balanceable, and let $G$ be a counterexample minimizing $n$.
By \Cref{lem:connected-balanceable}, $G$ is disconnected.
In the proof, we will analyze the structure of components of $G$, eventually reaching a contradiction.
Let $\mathcal{K}$ be the set of components of $G$ that are complete graphs (that is, isomorphic to $K_5$), let $U$ be the set of vertices belonging to a component in $\mathcal{K}$, and let $a$ be the cardinality of $\mathcal{K}$.
Furthermore, for $i\in \{0,1,2,3\}$, let $a_i$ be the number of components $C$ of $G-U$ such that and $|V(C)| \equiv i~({\rm mod}~4)$.

We continue the proof by proving several claims.

\begin{claim}\label{claim0}
Let $\mathcal{C}$ be a nonempty proper subset of the set of components of $G$ and let $S$ by the set of vertices of $G$ that belong to a component in $\mathcal{C}$.
Then $|S|\not\equiv 0~({\rm mod}~4)$.
\end{claim}

\begin{proof}
Suppose that $|S|\equiv 0~({\rm mod}~4)$.
The assumption on $\mathcal{C}$ implies that the set $S$ is a nonempty proper subset of $V(G)$.
The subgraphs of $G$ induced by $S$ and $V(G)\setminus S$ are $4$-regular and of order divisible by $4$, hence, by the minimality of $G$, they are both balanceable.
By \Cref{cor:join-4-regular}, $G$ is balanceable, a contradiction.
\end{proof}

\begin{claim}
    $a_0=0$.
\end{claim}

\begin{proof}
Suppose that $a_0>0$.
Then, there exists a component $C$ of $G$ such that $|V(C)| \equiv 0\pmod 4$.
Since $G$ is disconnected, the set $\{C\}$ is a nonempty proper subset of the set of components of $G$.
Hence, $|V(C)| \not\equiv 0\pmod 4$ by~\Cref{claim0}, a contradiction.
\end{proof}

 \begin{claim}\label{claim5}
    $a\in\{1,2,3\}$.
\end{claim}

\begin{proof}
If $a = 0$, then by Brooks' theorem, $G$ is $4$-colorable and, hence, balanceable by \Cref{lem:k4n0mod4}, a contradiction.
Therefore, $a\ge 1$.
Suppose for a contradiction that $a\ge 4$.
Choose a set $\mathcal{C}$ of any $4$~components from $\mathcal{K}$ and let $S$ by the set of vertices of $G$ that belong to a component in $\mathcal{C}$.
If $S \neq V(G)$, then \cref{claim0} implies that $|S|\not\equiv 0~({\rm mod}~4)$, a contradiction.
Hence, $S = V(G)$.
In particular, $\mathcal{C} = \mathcal{K}$ and $G$ is the disjoint union of four copies of $K_5$.
Choose any two components of $G$, let $V_1$ and $V_2$ denote their vertex sets, and let $H$ be the subgraph of $G$ induced by $V_1\cup V_2$.
Fix an arbitrary vertex $v_1\in V_1$ and two distinct vertices $v_2, v_2'\in V_2$.
Then $H$ is balanceable, as verified by applying \Cref{partition} to the set $W = V_1$ and the partition $\{X,Y\}$ of $V(G)$ given by $X = \{v_1,v_2,v_2'\}$ and $Y = V(H)\setminus X$.
Note that $G$ is isomorphic to the disjoint union of two copies of $H$.
Since $|E(H)| = 20$, \cref{lem:join} implies that $G$ is balanceable, a contradiction.
Therefore, $a\le 3$ and we conclude that $a\in\{1,2,3\}$, as claimed.
\end{proof}

\begin{claim}\label{claima3}
    $a_3=0$.
\end{claim}
\begin{proof}
    Suppose that $a_3>0$.
    Let $H$ be a component of $G$ such that $|V(H)|=4k+3$ for some integer $k$.
    By \cref{claim5}, $G$ has a component $K$ that is  isomorphic to $K_5$.
    Note that $|V(K+H)|\equiv 0 \pmod 4$.
    If $V(K+H)\neq V(G)$, then~\cref{claim0} implies that \hbox{$|V(K+H)| \not\equiv 0\pmod 4$}, a contradiction.
    Hence $G = K+H$.
    By Brooks' theorem, $H$ has an independent set $I$ such that $|I|=\left\lceil \frac{|V(H)|}{4}\right\rceil=k+1$. Let $J=I\cup\{v\}$, where $v$ is an arbitrary vertex of $K$. Thus $J$ is an independent set of $G$ and $$\sum_{x\in J}d(x)=4|J|=4(k+2)=4k+8=|V(G)|=\frac{|E(G)|}{2}.$$
    So $G$ is balanceable by \Cref{indep}, a contradiction.
\end{proof}

\begin{claim}\label{claim3}
    $a\leq 1$ or $a_2=0$.
\end{claim}

\begin{proof}
    Suppose that $a\geq 2$ and $a_2>0$.
    Let $H$ be an induced subgraph of $G$ consisting of two components isomorphic to $K_5$ and let $H'$ be a component of $G$ with $|V(H')|\equiv 2 \pmod 4$.
    Note that $|V(H'+H)|\equiv 0 \pmod 4$.
    If $V(H'+H)\neq V(G)$, then~\cref{claim0} implies that \hbox{$|V(H'+H)| \not\equiv 0\pmod 4$}, a contradiction.
    Hence $G = H+H'$; in particular, $G$ is isomorphic to the disjoint union of $H'$ and two copies of $K_5$.
    We have $|V(H')|=4k+2$ for some integer $k$.
    Using Brooks' theorem, we infer that $\chi(H')\leq 4$.
    Thus, there exists an independent set $I'$ in $H'$ such that $|I'|=\left\lceil\frac{|V(H')|}{4}\right\rceil=k+1$. Choose two vertices $u$ and $v$ in different components of $H$ and set $I=I'\cup \{u,v\}$.
    Then $|I|=k+3$ and $|V(G)|=4k+2+10=4(k+3)=4|I|$.
    So $\sum_{x\in I}d(x)=4|I|=|V(G)|=\frac{|E(G)|}{2}$.
    By \Cref{indep}, $G$ is balanceable, a contradiction.
\end{proof}

\begin{claim}\label{claim6}
    $a=1$ or $a_2=0$.
\end{claim}
\begin{proof}
    Immediately  from Claims~\ref{claim5} and \ref{claim3}.
\end{proof}

\begin{claim}\label{claim10}
    $a_1\leq 2$.
\end{claim}

\begin{proof}
    Suppose that $a_1\geq 3$.
    Let $F$ be an induced subgraph of $G$ consisting of one component isomorphic to $K_5$ and three components $C_j$, where $j\in \{1,2,3\}$, of $G-U$ of order $|V(C_j)|=4k_j+1$ such that $k_j$ is an integer.
    Since $|V(F)|\equiv 0 \pmod 4$, \Cref{claim0} implies that $G = F$.
    By Brooks' theorem, each component $C_j$ contains an independent set $I_j$ of size $k_j+1$.
    Put $I=I_1\cup I_2\cup I_3\cup\{v\}$.
    Then \[\sum_{x\in I}d(x)=4|I|=4(k+2)=4k+8=|V(G)|=\frac{|E(G)|}{2}\,,\] hence, $G$ is balanceable by \cref{indep}, a contradiction.
\end{proof}

\begin{claim}\label{claim7}
    $a_2>0$.
\end{claim}
\begin{proof}
    Suppose, contrary to our claim, that $a_2=0$.
    Then according to assumption of the theorem \[|V(G)|=\sum\{|V(C)|\colon C \text{ is a component of}~G\}\] should be $0$ modulo $4$.
    Using~\Cref{claima3}, this implies that $a+a_1\equiv 0\pmod 4$.
  By \cref{claim5}, we have $a\in\{1,2,3\}$.
  Since $a+a_1\equiv 0\pmod 4$, we infer that $a_1\equiv c\pmod 4$ where $c =4-a$.

    Set $G' = G-U$.
    Note that $c\in \{1,2,3\}$ and $G'$ has at least $c$ components.
    Choose a set $\mathcal{C}$ of $c$ components of $G'$ and let $F$ be the subgraph of $G$ induced by $U\cup \bigcup_{C\in\mathcal{C}}V(C)$.
    Then $|V(F)|\equiv 0 \pmod 4$ and \Cref{claim0} implies that $G = F$.
    Choose an independent set $J\subseteq U$ containing one vertex from each component in $\mathcal{K}$.
    For each component $C\in \mathcal{C}$, we have $|V(C)|=4k_C+1$ for some integer $k_C$.
    Brooks' theorem implies that each component $C\in \mathcal{C}$ has an independent set $J_C$ of size $\left\lceil\frac{|V(C)|}{4}\right\rceil=k_C+1$.
    Fix a component $D\in \mathcal{C}$.
    For each component $C\in \mathcal{C}\setminus\{D\}$, set  $I_C$ to be an arbitrary subset of $J_C$ such that $|I_C| = k_C$, that is, $I_C$ is obtained from $J_C$ by deleting from it an arbitrary vertex.
    Set \[I=J\cup J_D\cup \bigcup_{C\in\mathcal{C}\setminus\{D\}}I_C.\]
   Then $I$ is an independent set in $G$ such that \[|I| = a+(k_D+1)+\sum_{C\in \mathcal{C}\setminus\{D\}}k_C =\sum_{C\in \mathcal{C}}k_C+a+1\,.\]
    Furthermore,
  \[|V(G)| = |V(F)| = 5a+\sum_{C\in \mathcal{C}}(4k_C+1) = 4\sum_{C\in \mathcal{C}}k_C+5a+c = 4\sum_{C\in \mathcal{C}}k_C+4a+4\,.\]
  Therefore,
   \[\sum_{x\in I}d(x)=4|I|=4\sum_{C\in \mathcal{C}}k_C+4a+4=|V(G)| = \frac{|E(G)|}{2}\,,\]
    hence, $G$ is balanceable by \Cref{partition}, a contradiction.
\end{proof}
\begin{claim}\label{claim9}
    $a=1$.
\end{claim}
\begin{proof}
    Immediately from Claims~\ref{claim6} and~\ref{claim7}.
\end{proof}

\begin{claim}
    $a_1=1$.
\end{claim}
\begin{proof}
    Suppose for a contradiction that $a_1\neq 1$.
  Then \cref{claim10} implies that $a_1\in \{0,2\}$.
  Using Claims~\ref{claima3} and~\ref{claim9}, we infer that $|V(G)|\equiv a_1+1 \pmod 4$.
  Hence, $|V(G)|$ is odd, a contradiction.
\end{proof}

Now we are ready to complete the proof of \Cref{Thm:4-regular}.
Let $F$ be an induced subgraph of $G$ consisting of the unique component of $G$ isomorphic to $K_5$, the unique component $C$ of $G-U$ with $|V(C)|=4k+1$ for some integer $k$, and exactly one component $C'$ with $|V(C')|=4k'+2$ for some integer $k'$.
Since $|V(F)|\equiv 0 \pmod 4$, \Cref{claim0} implies that $G = F$.
Using Brooks' theorem, we can see that $C$ contains an independent set $I$ of size $k+1$ and $C'$ has an independent set $I'$ of size $k'+1$.
Set $J=I\cup I'$.
Note that $|V(G)| = |V(F)|=4(k+k')+8$, moreover $|J|=k+k'+2$ and hence $\sum_{x\in J}d(x)=|V(G)|=\frac{|E(G)|}{2}$.
So $G$ is balanceable due to \cref{indep}, a contradiction. This completes the proof.
\end{proof}

As shown by the disjoint union of $4$ copies of the complete graph $K_5$ (see the paragraph after the proof of \Cref{lem:connected}), \Cref{Thm:4-regular} cannot be strengthened by replacing balanceability with simple balanceability.

\Cref{Thm:4-regular} generalizes Lemmas 15 and 16 in~\cite{main}, which establish that certain $4$-regular circulant graphs are balanceable.
The theorem also implies the following.

\begin{corollary}
For every cubic graph $G$, the graph $G\Box K_2$ is balanceable.
\end{corollary}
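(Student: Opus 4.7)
The plan is to reduce this immediately to \Cref{Thm:4-regular}. Given a cubic graph $G$ on $n$ vertices, I would first observe that $G\Box K_2$ is $4$-regular: each vertex $(v,i)\in V(G)\times V(K_2)$ has three neighbors of the form $(u,i)$ with $u\in N_G(v)$ and one neighbor of the form $(v,1-i)$, for a total degree of $3+1=4$.

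Next I would verify the order condition. Since $G$ is cubic, the handshaking lemma gives $3n = 2|E(G)|$, forcing $n$ to be even. Hence $|V(G\Box K_2)| = 2n$ is divisible by $4$, that is, $2n\equiv 0~({\rm mod}~4)$.

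Both hypotheses of \Cref{Thm:4-regular} are now satisfied, so $G\Box K_2$ is balanceable. There is no real obstacle here; the corollary is an immediate byproduct of the structural theorem, and the only thing to check carefully is the parity of $n$, which is automatic for cubic graphs.
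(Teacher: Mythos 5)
Your proposal is correct and follows exactly the same route as the paper: verify that $G\Box K_2$ is $4$-regular, use the handshaking lemma to see that a cubic graph has even order so that $|V(G\Box K_2)|\equiv 0~({\rm mod}~4)$, and apply \Cref{Thm:4-regular}. Your version even spells out the degree count more explicitly than the paper does.
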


\begin{proof}
It can be easily seen that $G\Box K_2$ is a $4$-regular graph.
Furthermore, since $G$ is cubic, it has an even number of vertices.
Hence $n=|V(G\Box K_2)|=|V(G)|\times |V(K_2)|$ is even and $n\equiv 0~({\rm mod}~4)$.
By \Cref{Thm:4-regular}, $G\Box K_2$ is balanceable.
\end{proof}

\section{Conclusion}\label{sec:concl}

We proved several new results about regular balanceable graphs.
Important tools in our approach are Brooks' theorem and a sufficient condition for balanceability, which we call simple balanceability.
We characterized simply balanceable regular graphs via a condition involving the independence number of the graph, leading, on the one hand, to polynomially recognizable cases for small degrees and, on the other hand, to an \NP-completeness proof for the problem of recognizing simply balanceable $9$-regular graphs.

While balanceability coincides with simple balanceability for $k$-regular graphs with $k\in \{1,2\}$, this is not the case for $k\in \{3,4\}$.
For these cases we obtained either complete (for $k = 3$) or partial (for $k = 4$) characterizations of balanceability.
In particular, the results of \Cref{sec:regular,cubic} answer the question regarding the complexity of recognizing balanceable graphs when the input is restricted to $k$-regular graphs for $k\le 3$.
For all $k\ge 4$, the question remains open.

Our results are summarized in \cref{table:results-b,table:results-sb}.

\medskip

 \begin{table}[h!]
  \begin{center}
    \label{tab:table1}
    \begin{tabular}{|c||c|c|}
              \hline
     $k$ &  Characterizing conditions & Recognition complexity\\
        \hline
        \hline
       $1$ &  /  (\cref{obs:1-regular}) & $\PP$ (trivial) \\
       \hline
       $2$ &   $n\not\equiv 2~({\rm mod}~4)$ (\cref{thm:2-regular}) & $\PP$  (\cref{thm:2-regular})\\
       \hline
       $3$ &  / (\cref{Thm:cubic graph}) &  $\PP$ (trivial)\\
       \hline
       $4$ &  ? ($n\equiv 0~({\rm mod}~2)$ is necessary by \cref{cor:4-regular}, & ? (in $\PP$ if $n\not\equiv 2~({\rm mod}~4)$, \\
     &  $n\equiv 0~({\rm mod}~4)$ is sufficient, by \cref{Thm:4-regular}) & by \cref{cor:4-regular,Thm:4-regular})\\
       \hline  $\ge 5$ &   ? & ?\\
       \hline
    \end{tabular}
  \end{center}
  \caption{Summary of our results on {\bf  balanceability} of $k$-regular graphs of order $n$, for various small values of $k$. $\NPc$ stands for ``$\NP$-complete'', $\PP$ for ``solvable in polynomial time''.
 Question marks denote open questions.}\label{table:results-b}
\end{table}
 \begin{table}[h!]
  \begin{center}
    \label{tab:table1}
    \begin{adjustbox}{max width=\textwidth}
    \begin{tabular}{|c||c|c|}
          \hline
     $k$ & Characterizing conditions & Recognition complexity\\
        \hline
        \hline
       $1$ & /  (\cref{obs:1-regular}) & $\PP$ (trivial) \\
       \hline
       $2$ &  $n\not\equiv 2~({\rm mod}~4)$ (\cref{thm:2-regular}) & $\PP$  (\cref{thm:2-regular})\\
       \hline
       $3$ & $n\equiv 0~({\rm mod}~4)$ (\cref{lem:k23n0mod4}) & $\PP$   (\cref{lem:k23n0mod4})\\
       \hline
       $4$ & $n\equiv 0~({\rm mod}~4)$ (if $G$ is connected) (\cref{lem:connected})& ? (in $\PP$ if $G$ is connected, by \cref{lem:connected})\\
       \hline
       $\ge 5$ &  $n\equiv 0~({\rm mod}~4)$ and $\alpha(G)\ge n/4$ (\cref{{lem:regular-sb}}) & $\NPc$ for $k = 9$ (\cref{prop:p-colorable}),\\
       &&  open for all other values of $k$\\
       \hline
    \end{tabular}
    \end{adjustbox}
  \end{center}
  \caption{Summary of our results on {\bf simple balanceability} of $k$-regular graphs of order $n$, for various small values of $k$. $\NPc$ stands for ``$\NP$-complete'', $\PP$ for ``solvable in polynomial time''.
   The question mark denotes an open question.}\label{table:results-sb}
\end{table}

\begin{sloppypar}
Besides determining the complexity of recognizing balanceable graphs within the classes of \hbox{$k$-regular graphs} for $k\ge 4$, or even in general (which Dailly, Hansberg, and Ventura conjectured to be $\NP$-complete~\cite{main}), it would be interesting classify the boundary between easy and difficult cases for the recognition of simply balanceable $k$-regular graphs.
In particular, is there an integer $k_0$ such that recognizing simply balanceable $k$-regular graphs is polynomial-time solvable for all $k < k_0$ and $\NP$-complete for all $k\ge k_0$?
The results of this paper imply that if such a $k_0$ exists, then $4\le k_0\le 9$, unless $\PP = \NP$.
Understanding the balanceability and simple balanceability properties within hereditary graph classes (that is, graph classes closed under vertex deletion) is also a largely unexplored topic, which we leave for future research.
\end{sloppypar}

\subsection*{Acknowledgments}

This work is supported in part by the Slovenian Research and Innovation Agency (I0-0035, research program P1-0285 and research projects J1-3001, J1-3002, J1-3003, J1-4008, J1-4084, N1-0160, and N1-0208), and by the research program CogniCom (0013103) at the University of Primorska.

\end{document}